\title{\Large On the maximum partial-dual genus of a planar graph}
\author{ {Jiaying Chen\,, Xian'an Jin\,, Gang Zhang}\vspace{2mm}\\
	\small  School of Mathematical Sciences, Xiamen University,\\
	\small  Xiamen, Fujian 361005, P.R. China\\
}
\date{\small E-mails: jia\_ying\_chen@163.com, xajin@xmu.edu.cn, gzh\_ang@163.com}
\newtheorem{theorem}{Theorem}[section]
\newtheorem{lemma}[theorem]{Lemma}
\newtheorem{corollary}[theorem]{Corollary}
\newtheorem{claim}{Claim}[section]
\newtheorem{conjecture}[theorem]{Conjecture}
\newtheorem{proposition}[theorem]{Proposition}
\newcommand{\vertex}{\node[vertex]}
\tikzstyle{vertex}=[circle, draw, inner sep=0pt, minimum size=6pt]
\newenvironment{theorembis}[1]
 {%
 \addtocounter{theorem}{-1}%
 \begin{theorem}}
 {\end{theorem}}
\begin{document}

\maketitle

\small \noindent{\bfseries Abstract} Let $G$ be an embedded graph and $A$ an edge subset of $G$. The partial dual of $G$ with respect to $A$, denoted by $G^A$, can be viewed as the geometric dual $G^*$ of $G$ over $A$. If $A=E(G)$, then $G^A=G^*$. Denote by $\gamma(G^A)$ the genus of the embedded graph $G^A$. The maximum partial-dual genus of $G$ is defined as $$^\partial\gamma_{M}(G):=\max_{A \subseteq E(G)}\gamma(G^A).$$
For any planar graph $G$, it had been proved that $^\partial\gamma_{M}(G)$ does not rely on the embeddings of $G$. In this paper, we further prove that if $G$ is a connected planar graph of order $n\geq 2$, then $^{\partial}\gamma_{M}(G)\geq \frac{n-n_2-2n_1}{2}+1$, where $n_i$ is the number of vertices of degree $i$ in $G$. As a consequence, if $G$ is a connected planar graph of order $n$ with minimum degree at least 3, then $^{\partial}\gamma_{M}(G) \geq \frac{n}{2}+1$. Denote by $G^c$ the complement of a graph $G$ and by $\chi(G^c)$ the chromatic number of $G^c$. Moreover, we prove that if $G \ncong K_4$ is a $\lambda$-edge-connected planar graph of order $n$, then $^{\partial}\gamma_{M}(G) \geq f(n,\lambda,\chi(G^c))$, where $f(n,\lambda,\chi(G^c))$ is a function of $n$, $\lambda$ and $\chi(G^c)$. The first lower bound is tight for any $n$, and the second lower bound is tight for some 3-edge-connected graphs.\\
{\bfseries Keywords}: Partial duals; Maximum genus; Planar graphs; Chromatic number 

\section {\large Introduction}\label{sec1}

Unless otherwise stated, the graphs considered in this paper are all simple and finite, and terms and notations without explicit description are referred to \cite{Bondy2008,Ellis-Monaghan2013}. Let $k \geq 0$ be an integer. A graph $G$ is {\it $k$-connected} (resp., {\it $k$-edge-connected}) if there exist $k$ pairwise internally vertex-disjoint (resp., edge-disjoint) $uv$-paths between any two distinct vertices $u$ and $v$ in $G$. The maximum value of $k$ is called the {\it connectivity} and {\it edge connectivity} of $G$, denoted by $\kappa(G)$ and $\lambda(G)$, respectively. Every $k$-connected graph must be $k$-edge-connected. We use $P_n$, $C_n$, $K_n$ and $nK_1$ to denote the path, cycle, complete graph and empty graph of order $n$, and use $c(G)$ and $G^c$ to denote the number of the components of $G$ and the complement of $G$, respectively. For any integer $l \geq 1$, we denote $[l]:=\{1,2,\ldots,l\}$. An {\it $l$-coloring} of a graph $G$ is a mapping from $V(G)$ to the color set $[l]$ such that no two adjacent vertices have the same color in $G$. The minimum value of $l$ is called the {\it chromatic number} of $G$, denoted by $\chi(G)$.

A {\it surface} $\mathit{\Sigma}$ is a compact and connected 2-manifold with no boundaries. A graph $G$ is said to be {\it embedded} on $\mathit{\Sigma}$ if it is drawn on $\mathit{\Sigma}$ so that its edges intersect only at the common vertices. A graph embedding in this paper is always assumed to be a cellular embedding. The {\it maximum (orientable) genus} of a connected graph $G$, denoted by $\gamma_M(G)$, is the maximum integer $g$ such that there exists an embedding of $G$ on an orientable surface with genus $g$. Since every embedding of $G$ has at least one face, and by Euler’s formula, one can easily obtain that $\gamma_M(G) \leq \left\lfloor\frac{\beta(G)}{2}\right\rfloor$, where $\beta(G)$ is the {\it Betti number} of $G$. We say that $G$ is {\it upper-embeddable} if $\gamma_M(G) = \left\lfloor\frac{\beta(G)}{2}\right\rfloor$.

For any edge or vertex subset $A$ of a (connected) graph $G$, we denote by $G[A]$ the {\it induced subgraph of $G$ by $A$}, and by $G-A$ the {\it graph obtained by deleting $A$ from $G$}. Let $\mathcal{T}$ be the family of spanning trees of $G$. For each $T \in \mathcal{T}$, let $\xi(G-E(T))$ be the number of odd components of $G-E(T)$ (that is, the components with odd number of edges). Moreover, define $$\xi(G):=\min_{T \in \mathcal{T}}\xi(G-E(T)).$$
Notably, the maximum genus of $G$ can be determined by the Xuong formula.

\begin{theorem}(Xuong \cite{Xuong1979}).\label{th1.1}
	If $G$ is a connected graph, then $\gamma_{M}(G)=\frac{\beta(G)-\xi(G)}{2}$. Moreover, $G$ is upper-embeddable if and only if $\xi(G) \in \{0,1\}$.
\end{theorem}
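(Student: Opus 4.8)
The plan is to recast the statement in terms of the number of faces and then build an optimal embedding handle-by-handle from a tree that achieves $\xi(G)$. Writing $V$, $E$ and $F$ for the numbers of vertices, edges and faces of a cellular embedding of $G$ on an orientable surface of genus $g$, Euler's formula $V-E+F = 2-2g$ rearranges to $g = \frac{1}{2}(\beta(G)+1-F)$. Hence maximizing the genus is exactly the same as minimizing the number of faces, and the theorem is equivalent to the assertion that the least number of faces over all cellular embeddings of $G$ equals $\xi(G)+1$. I would establish the two inequalities $\gamma_M(G)\ge\frac{\beta(G)-\xi(G)}{2}$ and $\gamma_M(G)\le\frac{\beta(G)-\xi(G)}{2}$ separately.

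For the lower bound I fix a spanning tree $T$ attaining the minimum in the definition of $\xi(G)$ and start from the canonical one-face embedding of $T$ in the sphere (a tree has $F=1$). I then reinsert the cotree edges of $C:=G-E(T)$ one connected component at a time. The elementary surgery to track is the effect of adding a single edge to a cellular embedding: since adding a cotree edge raises $\beta$ by $1$, the identity $g=\frac{1}{2}(\beta+1-F)$ shows that the edge either splits one face into two (genus unchanged) or merges two faces into one (genus $+1$). The heart of the argument is that within one component $C_i$ carrying $\beta_i$ cotree edges, connectivity of $C_i$ lets me route its edges so that they pair into $\lfloor\beta_i/2\rfloor$ interlocking pairs, each pair realized as a handle (two edges added, face count restored, genus $+1$), with at most one leftover edge in an odd component performing a plain face split. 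Summing over components produces an embedding with $\sum_i\lfloor\beta_i/2\rfloor = \frac{1}{2}(\beta(G)-\xi(G-E(T))) = \frac{1}{2}(\beta(G)-\xi(G))$ handles, as required.

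For the matching upper bound I would show that every cellular embedding of $G$ has at least $\xi(G)+1$ faces. A clean target is to produce, from any embedding with $F$ faces, a spanning tree $T$ whose cotree $G-E(T)$ has at most $F-1$ odd components; then $\xi(G)\le\xi(G-E(T))\le F-1$, i.e. $F\ge\xi(G)+1$, which by Euler's formula is the desired bound. The key bookkeeping device is the parity observation that $\sum_i\beta_i=\beta(G)$ forces $\xi(G-E(T))\equiv\beta(G)\pmod 2$ for every $T$, so each component contributes at most $\lfloor\beta_i/2\rfloor$ handles and the count above cannot be exceeded. The ``moreover'' clause then drops out of the formula: upper-embeddability means $\gamma_M(G)=\lfloor\beta(G)/2\rfloor$, and since $\xi(G)$ always shares the parity of $\beta(G)$, the equality $\frac{1}{2}(\beta(G)-\xi(G))=\lfloor\beta(G)/2\rfloor$ holds precisely when $\xi(G)$ takes its smallest parity-admissible value, namely $\xi(G)\in\{0,1\}$.

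I expect the main obstacle to be the surgical lower-bound step: making rigorous that the connectivity of a cotree component genuinely permits pairing its edges into handles while keeping exact control of the running face count, rather than only heuristically. This calls for a careful induction on the edges of each $C_i$, maintaining as an invariant that the not-yet-paired edges lie on a common face so that the interlocking handle construction can be applied, together with a clean treatment of the single odd leftover edge. By contrast, once the face-splitting/face-merging dictionary is in place, the upper bound is largely a counting and parity argument.
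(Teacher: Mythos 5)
The paper does not prove this statement: Theorem \ref{th1.1} is Xuong's classical formula, quoted with a citation to \cite{Xuong1979} and used as a black box, so there is no in-paper argument to compare against. Judged on its own, your reduction via Euler's formula to ``minimum face count $=\xi(G)+1$'' and your lower-bound construction are sound in outline: this is exactly Xuong's handle-adding argument, resting on the lemma that a connected graph with $m$ edges contains $\lfloor m/2\rfloor$ pairwise disjoint adjacent edge pairs, together with the surgery fact that inserting an adjacent pair $uv,vw$ into any cellular embedding can be done so that the genus rises by at least one while the face count is not increased. One small correction: the invariant you propose (that the not-yet-paired edges stay on a common face) is neither needed nor what the standard proof maintains; the pair surgery works case by case regardless of which faces $u$, $v$, $w$ lie on, and it is cleaner to argue only that each adjacent pair raises the genus by at least one.

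The genuine gap is the upper bound, which is the hard half of Xuong's theorem. You correctly identify the target --- from an arbitrary cellular embedding with $F$ faces one must extract a spanning tree $T$ with $\xi(G-E(T))\le F-1$ --- but you give no construction of such a tree, and the parity observation you substitute for it does not do the job. The fact that $\xi(G-E(T))\equiv\beta(G)\pmod 2$ for every $T$ only tells you which residue class $\xi(G)$ lies in (and, granted the main formula, it does yield the ``moreover'' clause); it says nothing about why a maximum-genus embedding, which need not arise from your handle-by-handle construction, cannot have fewer than $\xi(G)+1$ faces. The sentence ``each component contributes at most $\lfloor\beta_i/2\rfloor$ handles and the count above cannot be exceeded'' tacitly assumes that every embedding is obtained by your construction, which is precisely what must be proved. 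This direction requires a separate argument (Xuong's original induction, or a deduction from Nebesk\'y's characterization, Theorem \ref{th1.2}), and as written your proposal does not contain one.
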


Furthermore, let $A$ be an edge subset of a connected graph $G$, and let $b(G-A)$ be the number of components of $G-A$ with odd Betti number. Nebesk\'{y} \cite{Nebesky1981} gave another characterization of the maximum genus of a connected graph.

\begin{theorem}(Nebesk\'{y} \cite{Nebesky1981}).\label{th1.2}
	If $G$ is a connected graph, then $$\xi(G)=\max \limits_{A\subseteq E(G)} \{c(G-A)+b(G-A)-|A|-1\}.$$ Moreover, $G$ is upper-embeddable if and only if $c(G-A)+b(G-A)-2\leq |A|$ for every edge subset $A\subseteq E(G)$.
\end{theorem}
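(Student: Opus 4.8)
The statement is a min-max equality, so the plan is to prove the two inequalities $\xi(G)\ge \max_{A}\{c(G-A)+b(G-A)-|A|-1\}$ and $\xi(G)\le \max_{A}\{c(G-A)+b(G-A)-|A|-1\}$ separately; the ``moreover'' clause is then immediate, since by Theorem~\ref{th1.1} upper-embeddability means $\xi(G)\le 1$, which by the main identity holds exactly when $c(G-A)+b(G-A)-|A|-1\le 1$, i.e. $c(G-A)+b(G-A)-2\le|A|$, for every $A$. Throughout I would write $\beta(H)=|E(H)|-|V(H)|+c(H)$ for the Betti number of a subgraph $H$, and for a fixed spanning tree $T$ call $\bar T:=E(G)\setminus E(T)$ its cotree, so that $\xi(G-E(T))$ counts the components of $\bar T$ (viewed on vertex set $V(G)$) carrying an odd number of edges.

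For the inequality $\xi(G)\ge\max_A\{\cdots\}$ it suffices to show, for every spanning tree $T$ and every $A\subseteq E(G)$, that $\xi(G-E(T))\ge c(G-A)+b(G-A)-|A|-1$; taking the minimum over $T$ then gives the claim. I would fix $T,A$, let $D_1,\dots,D_c$ be the components of $G-A$ (so $c=c(G-A)$), and split the cotree edges into those internal to some $D_j$ and those lying in $A':=A\cap\bar T$. Inside $D_j$ the tree edges form a spanning forest with, say, $p_j\ge 1$ components, while the internal cotree edges form a subgraph with $o_j$ components of odd edge-count. A short count gives $|A'|=|A|-\sum_j p_j+1$ together with the parity relation $\beta(D_j)\equiv o_j+p_j-1\pmod 2$, from which one deduces the per-component bound $o_j+p_j\ge 1+\sigma_j$, where $\sigma_j=1$ if $\beta(D_j)$ is odd and $0$ otherwise. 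The only case needing attention is $p_j=1$: there the number of internal cotree edges equals $\beta(D_j)$, so when $\beta(D_j)$ is odd we get $o_j\ge1$. Finally, reconstructing $\bar T$ from the internal cotree graphs by inserting the $|A'|$ edges of $A'$ one at a time changes the number of odd components by exactly $\pm1$ at each step, hence $\xi(G-E(T))\ge\sum_j o_j-|A'|=\sum_j(o_j+p_j)-|A|-1\ge c+b(G-A)-|A|-1$, using $\sum_j\sigma_j=b(G-A)$.

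The reverse inequality is the hard part, and I would prove it by exhibiting a single witness $A$ with $c(G-A)+b(G-A)-|A|-1\ge\xi(G)$. Using Theorem~\ref{th1.1} I would start from an optimal (Xuong) spanning tree $T^{*}$, so that $\xi(G)$ equals the number of odd components $K_1,\dots,K_r$ of $\bar{T^{*}}$. The aim is to choose $A$ to be a set of tree edges whose deletion cuts $G$ into pieces reproducing the odd/even split of the $K_i$: if each piece of $G-A$ is a single tree-piece containing the edges of exactly one cotree component, then for such a piece $\beta$ equals its number of internal cotree edges, the number of odd pieces is exactly the number of odd $K_i$, and $c(G-A)+b(G-A)-|A|-1$ collapses to $(|A|+1)+\xi(G)-|A|-1=\xi(G)$. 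The main obstacle is that a cotree component need not be ``tree-convex'': the $T^{*}$-path joining two of its vertices may pass through vertices of other components, so the $K_i$ cannot always be separated cleanly. I expect this to be the crux — resolving it should require either choosing $T^{*}$ well or allowing a piece to absorb several cotree components while controlling the resulting Betti parity so that the count of odd pieces is preserved, in either case arranging that the two equality conditions from the weak-duality chain hold (each inserted cut-cotree edge drops the odd-count by exactly one, and each piece satisfies $o_j+p_j=1+\sigma_j$). An alternative I would keep in reserve is induction on $\beta(G)$: the cases $\xi(G)\le 1$ follow at once from $A=\emptyset$, a pendant edge reduces to a smaller graph with the same $\xi$ and no larger value, and the genuine difficulty is again concentrated in the highly edge-connected case.
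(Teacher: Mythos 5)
First, note that the paper does not prove this statement at all: Theorem~\ref{th1.2} is Nebesk\'y's 1981 characterization, quoted from \cite{Nebesky1981} and used as a black box, so there is no in-paper argument to compare yours against; the question is only whether your proposal stands on its own.

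Your weak-duality half is correct and essentially complete. The edge count $|A'|=|A|-\sum_j p_j+1$, the parity relation $\beta(D_j)\equiv o_j+p_j-1\pmod 2$ (via $\beta(D_j)=m_j-p_j+1$ and $m_j\equiv o_j$), the per-component bound $o_j+p_j\geq 1+\sigma_j$, and the observation that inserting one edge changes the number of odd-edge components by exactly $\pm 1$ all check out, and together they give $\xi(G-E(T))\geq c(G-A)+b(G-A)-|A|-1$ for every $T$ and $A$. The ``moreover'' clause does follow from the main identity together with Theorem~\ref{th1.1} as you say. The genuine gap is the reverse inequality $\xi(G)\leq\max_A\{c(G-A)+b(G-A)-|A|-1\}$, which you correctly identify as the hard direction but do not prove: you describe the intended witness $A$ (tree edges of a Xuong-optimal tree separating the odd cotree components), explicitly name the obstruction (cotree components need not be tree-convex, so the odd pieces cannot in general be cleanly separated), and then say you ``expect this to be the crux'' and keep an induction on $\beta(G)$ ``in reserve.'' Neither route is carried out, and the obstruction you name is real --- for a fixed optimal tree the naive separation genuinely fails, and Nebesk\'y's actual argument requires a nontrivial global choice (in modern treatments this direction is usually derived from a deficiency/matroid-union or alternating-path argument rather than from a single Xuong tree). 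As written, the proposal establishes only one of the two inequalities, so the theorem is not proved.
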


Let $G^*$ be the geometric dual of an embedded graph $G$, and $A \subseteq E(G)$. In 2009, Chmutov \cite{Chmutov2009} introduced the partial duals of $G$ with respect to $A$, denoted by $G^A$ as usual. Note that $G^{\emptyset}=G$ and $G^{E(G)}=G^*$, and $G^A$ may not be simple. The {\it maximum partial-dual (orientable) genus} $^\partial\gamma_{M}(G)$ of $G$ is defined as $$^\partial\gamma_{M}(G):=\max_{A \subseteq E(G)}\gamma(G^A),$$ where $\gamma(G^A)$ is the genus of the embedded graph $G^A$.

Inspired by the results of Theorems \ref{th1.1} and \ref{th1.2} about the maximum genus of a connected graph, Chen and Chen \cite{Chen&Chen2022}, and Qin and Chen \cite{Qin2024} studied the maximum partial-dual genus of a connected planar graph. Following their notations, for a spanning tree $T$ and an edge subset $A$ of a connected graph $G$, let $$x_G(T):=c(G-E(T)) \text{~~and~~} y_G(A):=2c(G-A)-|A|-1,$$
and moreover, let $$x_G:=\min_{T \in \mathcal{T}}x_G(T) \text{~~and~~} y_G:=\max_{A \subseteq E(G)}y_G(A).$$

\begin{theorem}(Chen and Chen \cite{Chen&Chen2022}).\label{th1.3}
	If $G$ is a connected planar graph of order $n$, then $^\partial\gamma_{M}(G)=n-x_G$.
\end{theorem}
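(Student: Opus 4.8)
The plan is to first reduce the topological quantity $\gamma(G^A)$ to purely combinatorial data of the spanning subgraphs of $G$, and then to optimise over $A$ by a spanning-tree argument. Throughout, for $A\subseteq E(G)$ write $A^c:=E(G)\setminus A$ and let $(V(G),A)$ denote the spanning subgraph of $G$ with edge set $A$; note that in the notation of the excerpt $(V(G),A^c)=G-A$ and $(V(G),A)=G-A^c$. I would rely on two standard facts about Chmutov's partial duality: that $e(G^A)=e(G)$, and that, from the composition rule $G^{A\triangle B}=(G^A)^B$, one gets $(G^A)^*=(G^A)^{E(G)}=G^{A^c}$. Since $G$ is connected, $G^A$ is a connected embedded graph, and since $G^{A^c}$ is its geometric dual, the faces of $G^A$ are exactly the vertices of $G^{A^c}$.

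First step (genus formula). I would establish that
$$\gamma(G^A)=n+1-c\big((V(G),A)\big)-c\big((V(G),A^c)\big),$$
which is manifestly symmetric in $A\leftrightarrow A^c$, as it must be because $G^{A^c}$ is the dual of $G^A$ and shares its genus. The route is Euler's formula $v(G^A)-e(G^A)+f(G^A)=2-2\gamma(G^A)$ for the connected embedded graph $G^A$, where $e(G^A)=e(G)$ and $f(G^A)=v(G^{A^c})$ by the duality above, so everything reduces to counting the vertices of a partial dual. The key lemma, which I regard as the technical heart, is that for a plane graph the vertices of $G^A$ are in bijection with the boundary components of the spanning ribbon subgraph on $A$; as that subgraph is again plane (genus $0$ on each component), an Euler-characteristic count of its boundary components gives $v(G^A)=2\,c((V(G),A))+|A|-n$ using $\beta((V(G),A))=|A|-n+c((V(G),A))$. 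Substituting this and its $A^c$-analogue into Euler's formula yields the displayed identity (equivalently $\gamma(G^A)=r(A)+r(A^c)-r(E(G))$ with $r$ the rank function of the cycle matroid).

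Second step (optimisation). Maximising over $A$ turns the problem into
$$^\partial\gamma_{M}(G)=n+1-\min_{A\subseteq E(G)}\big[\,c((V(G),A))+c((V(G),A^c))\,\big],$$
so it remains to prove $\min_{A}\big[c((V(G),A))+c((V(G),A^c))\big]=1+x_G$. For the upper bound I would take $A=E(T)$ for a spanning tree $T\in\mathcal{T}$ attaining $x_G$: then $(V(G),A)$ is connected while $(V(G),A^c)=G-E(T)$ has $x_G(T)=x_G$ components, giving the value $1+x_G$. For the lower bound, given an arbitrary $A$ I would take a spanning forest $F\subseteq A^c$ of $(V(G),A^c)$ and extend it to a spanning tree $T$ of $G$; the $c((V(G),A^c))-1$ extending edges are forced to lie in $A$, since $F$ is a maximal forest within $A^c$. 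The cotree $G-E(T)$ then contains all of $A^c$ except $F$ together with all of $A$ except those extending edges, so deleting at most $c((V(G),A^c))-1$ edges from the spanning subgraph $(V(G),A)$ raises its component count by at most that amount; hence $x_G\le c(G-E(T))\le c((V(G),A))+c((V(G),A^c))-1$. Combining the two bounds gives the identity, and therefore $^\partial\gamma_{M}(G)=n+1-(1+x_G)=n-x_G$.

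I expect the main obstacle to be the vertex-counting lemma of the first step: translating the purely topological vertex set of $G^A$ into the boundary components of a ribbon subgraph, and thence into the component count $c((V(G),A))$, is where genuine use of the ribbon-graph/partial-duality machinery is needed. Once that bridge is built, Euler's formula and the spanning-tree optimisation are routine; the only delicate point there is verifying that the extending edges must belong to $A$, which is precisely what forces the lower bound to match the spanning-tree minimum $x_G$ rather than a weaker quantity.
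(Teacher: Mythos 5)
The paper does not actually prove Theorem \ref{th1.3}: it imports the result from Chen and Chen \cite{Chen&Chen2022}, so there is no internal proof to compare against. Judged on its own, your argument is correct and follows what is essentially the standard route. The genus formula $\gamma(G^A)=n+1-c((V(G),A))-c((V(G),A^c))$ does follow as you describe from $e(G^A)=e(G)$, from $v(G^A)=|A|-n+2c((V(G),A))$, and from the dual count of faces via $(G^A)^*=G^{A^c}$; the one ingredient you quote rather than prove --- that the vertices of $G^A$ are in bijection with the boundary components of the spanning ribbon subgraph $(V(G),A)$ --- is a standard fact about partial duals (see \cite{Ellis-Monaghan2013}) and is legitimate to cite, and your reduction of the boundary-component count to $|A|-n+2c((V(G),A))$ correctly uses that every ribbon subgraph of a plane ribbon graph has genus zero on each component. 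The optimisation step is also sound: taking $A=E(T)$ for a tree attaining $x_G$ gives $\min_A\bigl[c((V(G),A))+c((V(G),A^c))\bigr]\le 1+x_G$, and in the reverse direction your extension of a maximal spanning forest $F$ of $(V(G),A^c)$ to a spanning tree $T$ by a set $X$ of $c((V(G),A^c))-1$ edges necessarily contained in $A$ yields $c(G-E(T))\le c((V(G),A\setminus X))\le c((V(G),A))+|X|$, since $(V(G),A\setminus X)$ is a spanning subgraph of $G-E(T)$ and each deleted edge raises the component count by at most one. This is, to the best of my knowledge, the same Euler-formula-plus-spanning-tree strategy used in the cited source, so you have not found a genuinely different proof, but you have reconstructed a complete and correct one.
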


Actually, the graph parameter $x_G$ is also called the {\it decay number} of $G$, which was introduced by \v{S}koviera \cite{Skoviera1992}. In 1995, Nebesk\'{y} \cite{Nebesky1995} proved that $x_G=y_G$ for any connected graph $G$. Together with Theorem \ref{th1.3}, Theorem \ref{th1.5} is immediate.

\begin{theorem}(Nebesk\'{y} \cite{Nebesky1995}).\label{th1.4}
	If $G$ is a connected graph, then $x_G=y_G$.
\end{theorem}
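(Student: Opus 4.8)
The plan is to recognize both $x_G$ and $y_G$ as the two sides of a matroid-union min-max for the cycle matroid $M=M(G)$. Write $r$ for the rank function of $M$, so that for any edge set $F\subseteq E(G)$ one has $r(F)=n-c((V,F))$, where $(V,F)$ denotes the spanning subgraph of $G$ with edge set $F$. Spanning trees of $G$ are exactly the bases of $M$, and for a spanning tree $T$ the cotree $G-E(T)$ is the spanning subgraph with edge set $E\setminus E(T)$; hence $c(G-E(T))=n-r(E\setminus E(T))$, and therefore $x_G=n-\max_{B}\,r(E\setminus B)$, the maximum being taken over all bases $B$ of $M$.

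Next I would relate $\max_B r(E\setminus B)$ to the union of two copies of $M$. For a basis $B$, a maximum independent subset $J\subseteq E\setminus B$ yields a pair of disjoint independent sets $(B,J)$ with $|B|+|J|=(n-1)+r(E\setminus B)$. Conversely, let $\mu$ denote the largest value of $|I_1|+|I_2|$ over all pairs of disjoint independent sets of $M$, equivalently the rank of the union matroid $M\vee M$, i.e.\ the maximum number of edges of $G$ that decompose into two forests. A short exchange argument then shows that an optimal pair can always be taken with one part a basis: if $I_1$ is not spanning, any element extending it must lie in $I_2$ (else the total could be increased), and moving such an element from $I_2$ to $I_1$ preserves $|I_1|+|I_2|$ while enlarging $I_1$, so after finitely many steps $I_1$ becomes a basis. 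Hence $(n-1)+\max_B r(E\setminus B)=\mu$.

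Now I would apply the matroid union theorem of Nash-Williams and Edmonds to the two copies of $M$, which gives $\mu=\min_{S\subseteq E}\big(|E\setminus S|+2\,r(S)\big)$. Substituting $r(S)=n-c((V,S))$ and writing $A:=E\setminus S$ (so that $(V,S)=G-A$, $|E\setminus S|=|A|$, and $A$ ranges over all edge subsets as $S$ does) gives, after simplification, $x_G = 2n-1-\mu=\max_{A\subseteq E(G)}\big(2c(G-A)-|A|-1\big)=y_G$. I would record the two inequalities separately for clarity: the bound $|B|+r(E\setminus B)\le |E\setminus S|+2r(S)$ holds for every basis $B$ and every $S$ by subadditivity of $r$, and this elementary estimate already delivers $x_G\ge y_G$; the reverse inequality $x_G\le y_G$ is where the genuine content of the matroid union theorem (the attainment of the min-max) is used.

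The step I expect to be the main obstacle is the reduction in the second paragraph, namely verifying cleanly that the maximum in $\max_B r(E\setminus B)$, which insists that $B$ be a full basis, coincides with the unconstrained matroid-union optimum $\mu$. The exchange argument is routine but must handle the degenerate case where the optimal pair uses few edges; there one checks $\mu\ge n-1$ by taking $I_1$ a basis and $I_2=\emptyset$. A secondary point to get right is the bookkeeping of the change of variables $A=E\setminus S$ together with the identity $r(S)=n-c(G-A)$, so that the algebra collapses exactly to $2c(G-A)-|A|-1$.
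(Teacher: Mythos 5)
This statement is one the paper does not prove at all: Theorem~\ref{th1.4} is imported verbatim from Nebesk\'{y}'s 1995 paper, so there is no in-paper proof to compare yours against. Judged on its own terms, your argument is correct and complete. The reduction $x_G=n-\max_B r(E\setminus B)$ is right (for a spanning tree $T$, $G-E(T)$ is the spanning subgraph with edge set $E\setminus E(T)$, so its component count is $n-r(E\setminus E(T))$); the exchange argument showing that an optimal disjoint pair $(I_1,I_2)$ can be normalized so that $I_1$ is a basis is sound, since any element extending a non-spanning $I_1$ must lie in $I_2$ on pain of contradicting optimality, and transferring it preserves the total while strictly growing $I_1$; and the change of variables $A=E\setminus S$, $r(S)=n-c(G-A)$ collapses $2n-1-\min_S\bigl(|E\setminus S|+2r(S)\bigr)$ exactly to $\max_A\bigl(2c(G-A)-|A|-1\bigr)=y_G$. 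Your split into the two inequalities is also the right way to present it: weak duality (subadditivity of $r$ across the partition by $S$) gives $x_G\ge y_G$ elementarily, while $x_G\le y_G$ rests on the attainment half of the Nash-Williams--Edmonds matroid union theorem. The only caveat is that this outsources the real content to matroid union, a heavier external tool than Nebesk\'{y}'s original direct graph-theoretic argument; that is a legitimate trade-off, but if the intent were a self-contained proof you would need to supply the union theorem (or at least its two-forest specialization) as well.
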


\begin{theorem}(Nebesk\'{y} \cite{Nebesky1995}, Chen and Chen \cite{Chen&Chen2022}).\label{th1.5}
	If $G$ is a connected planar graph of order $n$, then $^\partial\gamma_{M}(G)=n-y_G$.
\end{theorem}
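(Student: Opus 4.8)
The plan is to obtain the identity by directly composing the two results already placed in the excerpt, Theorem~\ref{th1.3} and Theorem~\ref{th1.4}. Theorem~\ref{th1.3} records that for a connected planar graph $G$ of order $n$ one has $^\partial\gamma_{M}(G)=n-x_G$, where $x_G=\min_{T\in\mathcal{T}}c(G-E(T))$ is the decay number. Theorem~\ref{th1.4} (Nebesk\'{y}) supplies an alternative description of this same quantity, namely $x_G=y_G$ with $y_G=\max_{A\subseteq E(G)}\bigl(2c(G-A)-|A|-1\bigr)$, valid for every connected graph. Substituting $x_G=y_G$ into the formula of Theorem~\ref{th1.3} gives $^\partial\gamma_{M}(G)=n-x_G=n-y_G$, which is precisely the asserted equality.

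Because both ingredients are quoted as known, the corollary itself carries no genuine obstacle: the single step is the substitution, and the only thing to verify is that the hypotheses line up. Theorem~\ref{th1.3} requires $G$ to be connected and planar, while Theorem~\ref{th1.4} needs only connectedness; hence planarity is used solely through Theorem~\ref{th1.3}, and both equalities apply under the hypothesis of the statement. I would also flag the reason for carrying both descriptions, which is practical rather than logical and dual in nature: a single spanning tree $T$ certifies the upper bound $x_G\le c(G-E(T))$, hence a lower bound on $^\partial\gamma_{M}(G)=n-x_G$, whereas a single edge subset $A$ certifies the lower bound $y_G\ge 2c(G-A)-|A|-1$, hence an upper bound on $n-y_G$. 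Having both forms on hand is what will make the various estimates elsewhere in the paper tractable.

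Consequently, the only place where real work is hidden is Theorem~\ref{th1.4}: if one wished to avoid quoting it, the argument would reduce to establishing the combinatorial identity $x_G=y_G$ from scratch, that is, proving both $x_G\le y_G$ and $x_G\ge y_G$ for an arbitrary connected graph. That identity is exactly Nebesk\'{y}'s contribution and is the genuinely nontrivial part of this circle of ideas; since we are entitled to assume it, I would not reproduce it, and would present the statement simply as the one-line corollary obtained above.
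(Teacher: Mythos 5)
Your proposal is correct and is exactly the paper's own derivation: the authors state that Theorem~\ref{th1.5} is immediate from combining Theorem~\ref{th1.3} ($^\partial\gamma_{M}(G)=n-x_G$) with Nebesk\'{y}'s identity $x_G=y_G$ of Theorem~\ref{th1.4}. Nothing further is needed.
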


By Theorems \ref{th1.3} and \ref{th1.5}, we know that the maximum partial-dual genus of a planar graph $G$ does not rely on the embeddings of $G$.

\vspace{3mm}

The topological invariant of graphs, maximum genus, has received a great deal of attentions over a long period of time. There are numerous research and results on the relationship between the maximum genus and other graph parameters, such as connectivity \cite{Chen&Archdeacon1996,Ouyang2010,Archdeacon2015}, diameter \cite{Skoviera1991,Huang&Liu1999}, girth \cite{Huang1999,Li2000}, independence number \cite{Huang&Liu1997,Huang&Liu2000}, domination number \cite{Deng2001}, decycling number \cite{Long2018} and so on. In particular, Chen and Kanchi \cite{Chen&Kanchi1996} proved that if $G$ is a connected graph with minimum degree $\delta(G) \geq 3$, then $\gamma_{M}(G) \geq \frac{\beta(G)}{4}$. Huang \cite{Huang2003} proved that if $G$ is a connected graph with edge connectivity $\lambda(G) \leq 3$, then 
\begin{displaymath}
	\gamma_{M}(G)\geq \left\{
	\begin{array}{ll}
		\frac{\beta(G)-\chi(G^c)}{2},& \text{if}\ \lambda(G)=1,\\
		\frac{\beta(G)-\max\{\chi(G^c)-1,1\}}{2},& \text{if}\ \lambda(G)=2,\\
		\frac{\beta(G)-\max\{\left\lfloor\frac{\chi(G^c)}{2}\right\rfloor-1,1\}}{2},& \text{if}\ \lambda(G)=3.
	\end{array} \right.
\end{displaymath}
We remark a well-known result in \cite{Kundu1974} that if $G$ is a 4-edge-connected graph, then $G$ contains two edge-disjoint spanning trees. Consequently, we have $\xi(G) \leq x_G = 1$. By Theorem \ref{th1.1}, we know that $G$ is upper-embeddable and $\gamma_M(G) = \left\lfloor\frac{\beta(G)}{2}\right\rfloor$, and by Theorem \ref{th1.3}, if $G$ is still planar and of order $n$, then $^{\partial}\gamma_{M}(G)=n-1$.

Motivated by the aforementioned work, we further study the maximum partial-dual genus of a connected planar graph in this paper, and we obtain

\begin{theorem}\label{th1.6}
	If $G$ is a connected planar graph of order $n\geq 2$, then 
	\begin{equation*}
		^{\partial}\gamma_{M}(G)\geq \max \{\frac{n-n_2-2n_1}{2}+1,0\},
	\end{equation*}
    where $n_i$ is the number of vertices of degree $i$ in $G$.
\end{theorem}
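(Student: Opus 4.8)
The plan is to work through the Nebesk\'y-type formula of \Cref{th1.5}, which gives ${}^{\partial}\gamma_{M}(G)=n-y_{G}$ with $y_{G}=\max_{A\subseteq E(G)}\bigl(2c(G-A)-|A|-1\bigr)$. Since the target is a lower bound on ${}^{\partial}\gamma_{M}(G)$, it is equivalent to prove the upper bound $y_{G}\le\frac{n+n_{2}+2n_{1}}{2}-1$, and unwinding the arithmetic this amounts to the purely combinatorial claim that, for every $A\subseteq E(G)$,
\begin{equation*}
	2c(G-A)-|A|\le\frac{n+n_{2}+2n_{1}}{2}.
\end{equation*}
Note that planarity enters only through \Cref{th1.5}; the displayed inequality is a statement about arbitrary connected graphs. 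Granting it, the conclusion follows immediately, and the $\max\{\cdot,0\}$ is automatic because a genus is nonnegative.

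To prove the displayed inequality I would first pass from a general $A$ to the partition it induces. Let $V_{1},\dots,V_{c}$ be the vertex sets of the components of $G-A$, so $c=c(G-A)$ and each $G[V_{i}]$ is connected; since $A$ contains every edge joining two distinct $V_{i}$, one has $|A|\ge\operatorname{cr}$, where $\operatorname{cr}$ is the number of such crossing edges. Writing $\operatorname{out}(V_{i})$ for the number of edges of $G$ with exactly one endpoint in $V_{i}$, and using $\sum_{i}\operatorname{out}(V_{i})=2\operatorname{cr}$, I obtain
\begin{equation*}
	2c(G-A)-|A|\le 2c-\operatorname{cr}=\sum_{i=1}^{c}\Bigl(2-\tfrac{1}{2}\operatorname{out}(V_{i})\Bigr).
\end{equation*}
The idea is now to charge each summand to the vertices it contains by means of the weight $g(v)=\tfrac{3}{2},1,\tfrac{1}{2}$ according as $\deg_{G}(v)=1$, $\deg_{G}(v)=2$, or $\deg_{G}(v)\ge 3$, which is tailored so that $\sum_{v}g(v)=\frac{n+n_{2}+2n_{1}}{2}$. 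Thus it suffices to establish the local inequality $2-\tfrac{1}{2}\operatorname{out}(S)\le\sum_{v\in S}g(v)$ for each connected part $S=V_{i}$.

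The heart of the argument, and the step I expect to be the main obstacle, is this local inequality, since its content is exactly that low-degree vertices must be penalized by the right amounts. When $c\ge 2$ connectivity of $G$ forces $\operatorname{out}(S)\ge 1$ for every part, and I would argue by cases on $\operatorname{out}(S)\in\{1,2,3\}$ versus $\operatorname{out}(S)\ge 4$: the large-boundary cases are immediate from $g\ge\tfrac12$, while the delicate cases are the small parts, where the weights $\tfrac{3}{2}$ and $1$ are forced to hold with equality (a singleton of degree $1$ or $2$, or a two-vertex part of degree sequence $\{1,2\}$ with $\operatorname{out}(S)=1$). The degenerate case $c=1$, corresponding to a part with $\operatorname{out}(S)=0$, falls outside this scheme and would be dispatched separately by checking the elementary bound $\frac{n+n_{2}+2n_{1}}{2}\ge 2$ for every connected $G$ with $n\ge 2$. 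Summing the local inequalities over the parts yields $2c(G-A)-|A|\le\sum_{v}g(v)=\frac{n+n_{2}+2n_{1}}{2}$, and combining with ${}^{\partial}\gamma_{M}(G)=n-y_{G}$ gives ${}^{\partial}\gamma_{M}(G)\ge\frac{n-n_{2}-2n_{1}}{2}+1$, completing the proof. Designing the weight function $g$ so that the per-part inequality is both correct and tight at the extremal small parts is the crux; once that is in place, everything else is bookkeeping.
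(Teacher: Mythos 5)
Your proof is correct, and it takes a genuinely different route from the paper's. Both arguments reduce, via Theorem \ref{th1.5}, to the same upper bound $y_G\le\frac{n+n_2+2n_1}{2}-1$ for arbitrary connected graphs (the paper's Theorem \ref{th3.1}), but the paper proves this by induction on $n$: it splits $G$ at a cut vertex using Lemma \ref{lem2.2}, at a $2$-edge-cut using Lemma \ref{lem2.1}, and disposes of the remaining $3$-edge-connected case with Lemma \ref{lem2.5}, so it leans on essentially all of the machinery of Section \ref{sec2}. You instead fix an arbitrary $A\subseteq E(G)$ and bound $2c(G-A)-|A|$ directly by a discharging argument: each component contributes $2-\frac12\operatorname{out}(V_i)$, and the weight $g(v)=\frac32,1,\frac12$ according to $\deg_G(v)=1,2,{\ge}3$ absorbs this locally. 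I checked your local inequality in all cases ($\operatorname{out}(S)\ge 3$ via $g\ge\frac12$; $\operatorname{out}(S)=2$ with $|S|=1$ forcing $\deg=2$; $\operatorname{out}(S)=1$ with $|S|=1$ forcing $\deg=1$ and with $|S|=2$ giving degree sequence $\{1,2\}$ and weight $\frac52$, which is strict rather than tight as your parenthetical suggests, but harmless), and the bookkeeping $\sum_i\operatorname{out}(V_i)=2\operatorname{cr}\le 2|A|$ and $\sum_v g(v)=\frac{n+n_2+2n_1}{2}$ is right, as is the separate treatment of $c(G-A)=1$. Your approach is shorter, non-inductive, and needs none of the structural lemmas; what the paper's decomposition buys is that Lemmas \ref{lem2.1}, \ref{lem2.2} and \ref{lem2.5} and Theorem \ref{thm2.3} are reused for Theorem \ref{th1.8}, whereas your charging argument is tailored to this one bound.
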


\begin{corollary}\label{coro1.7}
	If $G$ is a connected planar graph of order $n$ with $\delta(G) \geq 3$, then $$^{\partial}\gamma_{M}(G) \geq \frac{n}{2}+1.$$
\end{corollary}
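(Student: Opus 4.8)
Corollary~\ref{coro1.7} is an immediate consequence of Theorem~\ref{th1.6}. The assumption $\delta(G)\ge 3$ means that $G$ has no vertex of degree $1$ or $2$, so $n_1=n_2=0$; substituting into Theorem~\ref{th1.6} gives
\begin{equation*}
{}^{\partial}\gamma_{M}(G)\ge \max\Bigl\{\tfrac{n-n_2-2n_1}{2}+1,\,0\Bigr\}=\max\Bigl\{\tfrac{n}{2}+1,\,0\Bigr\}=\tfrac{n}{2}+1,
\end{equation*}
where the last equality holds because $\delta(G)\ge 3$ forces $n\ge 4$ and hence $\tfrac{n}{2}+1>0$. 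Thus the whole weight of the statement rests on Theorem~\ref{th1.6}, and what follows is how I would prove that theorem, specialized to the $\delta(G)\ge 3$ case where the target is cleanest.

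The first move is to replace the topological quantity by a combinatorial one through Theorem~\ref{th1.3}, which gives ${}^{\partial}\gamma_{M}(G)=n-x_{G}$ with $x_{G}=\min_{T\in\mathcal{T}}c(G-E(T))$. Hence the inequality to be proved is equivalent to the existence of a single spanning tree $T$ with
\begin{equation*}
c\bigl(G-E(T)\bigr)\le \tfrac{n}{2}-1
\end{equation*}
(for the full Theorem~\ref{th1.6} the right-hand side is $\tfrac{n+n_2+2n_1}{2}-1$). Writing $H=G-E(T)$, I would use the elementary bound that if $H$ has $i$ isolated vertices then its remaining $n-i$ vertices span non-trivial components of order at least $2$, so $c(H)\le i+\tfrac{n-i}{2}=\tfrac{n+i}{2}$. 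When $\delta(G)\ge 3$ no vertex is forced to be isolated, and I would aim to construct, by an exchange argument starting from an arbitrary spanning tree, a tree $T$ for which no vertex is saturated (that is, $i=0$), giving at once $c(H)\le \tfrac{n}{2}$.

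The crux is then to gain the final unit and reach $\tfrac{n}{2}-1$. Here I would bring in the edge count: the complement $H$ has exactly $\beta(G)=|E(G)|-n+1$ edges, and $\delta(G)\ge 3$ gives $\beta(G)\ge \tfrac{n}{2}+1$, so $H$ carries more than $\tfrac{n}{2}$ edges and therefore cannot be a perfect matching. Since the bound $c(H)\le \tfrac{n+i}{2}$ is tight only when $i=0$ and every component is a single edge, forcing a component of order at least $3$ already yields $c(H)\le \tfrac{n}{2}-1$ for even $n$. This also explains tightness: any graph on $n$ vertices with $\beta(G)$ edges satisfies $c(H)\ge n-\beta(G)$, so a $3$-regular graph has $c(H)\ge \tfrac{n}{2}-1$ for every $T$, and the bound ${}^{\partial}\gamma_{M}(G)=\tfrac{n}{2}+1$ is attained exactly when $H$ can be made a spanning forest.

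The main obstacle is precisely this last construction. For $3$-regular (and more generally sparse) graphs the counting argument is exactly tight, so achieving $c(G-E(T))\le\tfrac{n}{2}-1$ amounts to decomposing $E(G)$ into a spanning tree $T$ and a forest $H=G-E(T)$, and the pure counting bound $c(H)\le\tfrac{n+i}{2}$ leaves the borderline ``one triangle plus a matching'' configuration unexcluded for odd $n$. Ruling this out, and simultaneously accounting for the weakened savings contributed by degree-$1$ and degree-$2$ vertices (which is what produces the terms $2n_1$ and $n_2$ in Theorem~\ref{th1.6}), is where I expect the real effort to lie; I would attack it by a more careful, structurally guided choice of $T$ rather than by counting alone, or equivalently by working with the dual parameter $y_{G}$ from Theorem~\ref{th1.4}.
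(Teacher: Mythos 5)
Your first paragraph is, in substance, the paper's entire proof of Corollary \ref{coro1.7}: the corollary is stated as an immediate consequence of Theorem \ref{th1.6}, and your substitution $n_1=n_2=0$ (together with the observation that $\delta(G)\geq 3$ forces $n\geq 4$, so the maximum is $\frac{n}{2}+1$) is exactly the intended deduction. That part is correct and complete. The remainder of your proposal, an attempted proof of Theorem \ref{th1.6} itself, takes a genuinely different route from the paper and is, as you yourself acknowledge, not finished. The paper does not argue through $x_G$ and an explicit spanning-tree construction; it passes through Theorem \ref{th1.5}, so that the claim becomes the purely combinatorial bound $y_G\leq \frac{n+n_2(G)+2n_1(G)}{2}-1$ (Theorem \ref{th3.1}), proved by induction on $n$: cut vertices are absorbed by the additivity of $y_G$ under vertex identification (Lemma \ref{lem2.2}), $2$-edge-cuts by Lemma \ref{lem2.1}, and the remaining $3$-edge-connected case by the degree-counting bound $y_G\leq\left\lfloor\frac{c(G-A)}{2}\right\rfloor-1$ of Lemma \ref{lem2.5} applied to a smallest optimal edge set $A$. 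Your counting correctly locates the two obstructions in the spanning-tree formulation --- producing a tree $T$ with no isolated vertices in $G-E(T)$, and excluding the borderline ``triangle plus matching'' configuration for odd $n$ --- but neither the exchange argument nor the exclusion is actually carried out, and these are precisely where the difficulty sits; the paper's $y_G$-based induction sidesteps both. Since Theorem \ref{th1.6} is an established prior result in the paper, none of this affects the validity of your proof of the corollary, which stands on the first paragraph alone.
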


\begin{theorem}\label{th1.8}
	If $G \ncong K_4$ is a $\lambda$-edge-connected planar graph of order $n$ where $\lambda \leq 3$, then 
	\begin{displaymath}
		^{\partial}\gamma_{M}(G)\geq \left\{
		\begin{array}{ll}
			\max \{0,n-3\chi{(G^c)}\},& \text{if}\ \lambda=1,\\
			\max \{1,n-3\chi{(G^c)}+1\},& \text{if}\ \lambda=2,\\
			\max \{\left\lceil\frac{n}{2}\right\rceil+1,n-\left \lfloor \frac{3\chi{(G^c)}}{2} \right \rfloor+1\},& \text{if}\ \lambda=3.
		\end{array} \right.
	\end{displaymath}
\end{theorem}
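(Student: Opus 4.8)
The plan is to work entirely with the decay number. By Theorem~\ref{th1.3} (together with Theorems~\ref{th1.4} and~\ref{th1.5}) we have $^{\partial}\gamma_{M}(G)=n-x_G=n-y_G$, so each of the two lower bounds appearing in every case is equivalent to an upper bound on $x_G$, and I would prove the two bounds independently, the theorem's maximum being simply the better of the two resulting lower bounds. Writing $r:=\chi(G^c)$ and $y_G(A)=2c(G-A)-|A|-1$, it suffices to establish (i) the ``constant'' bounds $x_G\le n$ $(\lambda=1)$, $x_G\le n-1$ $(\lambda=2)$, $x_G\le\lfloor n/2\rfloor-1$ $(\lambda=3)$, and (ii) the ``chromatic'' bounds $x_G\le 3r$ $(\lambda=1)$, $x_G\le 3r-1$ $(\lambda=2)$, $x_G\le\lfloor 3r/2\rfloor-1$ $(\lambda=3)$.

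For (i) I would use the Nebesk\'y form $x_G=y_G=\max_{A}y_G(A)$ and a cut-counting argument. If $A\subseteq E(G)$ with $c:=c(G-A)\ge 2$, then since $G$ is $\lambda$-edge-connected every component of $G-A$ sends at least $\lambda$ edges of $A$ to the rest, so double counting gives $|A|\ge \lambda c/2$ and hence
\begin{equation*}
y_G(A)=2c-|A|-1\le\Big(2-\tfrac{\lambda}{2}\Big)c-1 .
\end{equation*}
For $\lambda=2$ this is $c-1\le n-1$, and for $\lambda=3$ it is $c/2-1\le n/2-1$ (the case $c=1$ gives $y_G(A)\le 1$ and is dominated); maximising over $A$ yields $x_G\le n-1$ and $x_G\le n/2-1$, hence $x_G\le\lfloor n/2\rfloor-1$ by integrality, the latter being exactly Corollary~\ref{coro1.7} once one observes that $\lambda=3$ forces $\delta(G)\ge 3$. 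For $\lambda=1$ the bound $x_G\le n$, i.e.\ $^{\partial}\gamma_{M}(G)\ge 0$, is trivial.

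For (ii) I would fix a minimum clique cover $V_1,\dots,V_r$ of $G$ (so $r=\chi(G^c)$) and build one spanning tree $T$ explicitly: inside each clique take a Hamiltonian path of $K_{|V_i|}$, and add $r-1$ inter-clique edges forming a spanning tree of the contracted graph (a count of edges shows $T$ is indeed a spanning tree). A short check of $K_s$ minus a Hamiltonian path shows that in the cotree each clique splits into ``pieces'': a clique of size $1$ or at least $4$ contributes a single piece, while a clique of size $2$ or $3$ contributes exactly two pieces. Hence there are at most $2r$ pieces, and as the cotree components are obtained from the pieces by merging along the remaining inter-clique edges, $x_G\le c(G-E(T))\le 2r$. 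Since $2r\le 3r$ and $2r\le 3r-1$ for $r\ge1$, this already settles the chromatic bounds for $\lambda=1$ and $\lambda=2$ with no further work.

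The hard case, and the main obstacle, is $\lambda=3$, where the target $\lfloor 3r/2\rfloor-1$ lies strictly below the crude count $2r$, so the inter-clique cotree edges must genuinely reduce the number of components. Here $\lambda=3$ guarantees at least $3r/2$ inter-clique edges (each clique has at least $3$ outgoing edges, double counted), of which only $r-1$ are spent on the tree, leaving at least $r/2+1$ as cotree edges available to merge pieces; since at most $2r$ pieces must be brought down to $\lfloor 3r/2\rfloor-1$ components, one needs these to realise roughly $r/2+1$ \emph{effective} merges. The delicate point is that an inter-clique cotree edge lowers the component count only when its endpoints lie in pieces not already joined, i.e.\ the merging edges must form a large forest on the ``piece graph.'' I expect the crux to be a careful coordination of which inter-clique edges are placed in the tree and which in the cotree---absorbing the redundant, cycle-creating edges into the tree---so that the surviving cotree edges span enough pieces. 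This is tight precisely when every clique has size $2$ or $3$ and each clique has exactly three outgoing edges (the triangular prism, with $r=2$ and $x_G=2=\lfloor 3r/2\rfloor-1$, being the smallest example), which explains why the second bound is claimed to be tight only for certain $3$-edge-connected graphs.
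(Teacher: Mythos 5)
Your proposal does not close the hardest case: the chromatic bound for $\lambda=3$, namely $y_G\le\left\lfloor\frac{3\chi(G^c)}{2}\right\rfloor-1$, is left as a plan rather than a proof. You correctly observe that your spanning-tree construction yields at most $2r$ cotree pieces and that $3$-edge-connectivity supplies at least $\lceil 3r/2\rceil$ inter-clique edges, of which at least $r/2+1$ survive into the cotree; but the step from ``$r/2+1$ surviving edges'' to ``$r/2+1$ effective merges'' is exactly the content you defer (``I expect the crux to be a careful coordination\dots''). This is not a routine verification: the leftover inter-clique edges can be parallel in the contracted multigraph, or can land on pieces already joined through other cliques, in which case they create cycles instead of reducing the component count, and it is not clear that the tree can always be chosen to absorb all such redundancies. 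Since this case is the substantive novelty of Theorem \ref{th1.8} (it is the one the paper shows to be tight via Proposition \ref{prop3.3}), the proposal as written is incomplete. The paper sidesteps the difficulty entirely by arguing extremally rather than constructively: it takes a \emph{smallest} edge set $A$ with $y_G(A)=y_G\ge 2$, uses Theorem \ref{thm2.3}(iii) (at most one edge between any two components of $G-A$, and $|E(F_1,\dots,F_k)|\le 2k-3$) to prove the single inequality $c(G-A)\le 3\chi(G^c)$ valid for every $\lambda$, and then lets the $\lambda$-dependence enter only through Lemma \ref{lem2.5}, which for $\lambda=3$ gives $y_G\le\left\lfloor\frac{c(G-A)}{2}\right\rfloor-1$ directly.

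The parts you do complete are correct and worth noting. Your double-counting proof of the ``constant'' bounds ($2|A|\ge\lambda\, c(G-A)$ for any $A$ with $c(G-A)\ge 2$) is essentially the paper's Lemma \ref{lem2.5}, just applied to an arbitrary edge set instead of an extremal one. Your treatment of the chromatic bounds for $\lambda\in\{1,2\}$ is a genuinely different route: instead of analyzing an optimal $A$, you exhibit one good spanning tree (Hamiltonian paths inside the cliques of a minimum clique cover plus $r-1$ connecting edges), and the piece count (one piece for cliques of size $1$ or $\ge 4$, two for sizes $2$ and $3$) gives $x_G\le c(G-E(T))\le 2\chi(G^c)$. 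That is strictly stronger than the paper's $3\chi(G^c)$ and $3\chi(G^c)-1$, and in fact essentially establishes the $\lambda=1$ case of the paper's Conjecture 4.2 --- so if this argument is sound you should flag it, because the authors state they could not find graphs attaining their $\lambda\in\{1,2\}$ bounds. But none of this repairs the missing $\lambda=3$ argument.
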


\section {\large Preliminaries}\label{sec2}

In this section, we present some preliminary results that will be used to prove our main results.

\begin{lemma}\label{lem2.1}
	Let $A$ be a minimal edge cut of a connected graph $G$, and $G_1$ and $G_2$ be the two components of $G-A$. Then
	\begin{displaymath}
		y_G= \left\{
		\begin{array}{ll}
			y_{G_1}+y_{G_2},& \text{if}\ |A|=1,\\
			y_{G_1}+y_{G_2}-1,& \text{if}\ |A|=2.
		\end{array} \right.
	\end{displaymath}
\end{lemma}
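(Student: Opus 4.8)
The plan is to evaluate the functional $y_G(F) = 2c(G-F) - |F| - 1$ on an arbitrary edge subset $F \subseteq E(G)$ by splitting it along the decomposition $E(G) = E(G_1) \sqcup E(G_2) \sqcup A$. Writing $F_1 = F \cap E(G_1)$, $F_2 = F \cap E(G_2)$ and $F_A = F \cap A$, so that $|F| = |F_1| + |F_2| + |F_A|$, I want to express $y_G(F)$ in terms of $y_{G_1}(F_1)$ and $y_{G_2}(F_2)$. Here I use that, since $A$ is a minimal edge cut of the connected graph $G$, the graph $G-A$ has exactly the two connected components $G_1, G_2$ and every edge of $A$ joins a vertex of $G_1$ to a vertex of $G_2$; this is precisely what lets me relate $c(G-F)$ to $c(G_1 - F_1)$ and $c(G_2 - F_2)$. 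The whole argument then reduces to bookkeeping of how many of the \emph{retained} cut-edges $A \setminus F_A$ merge components of the disjoint union $(G_1 - F_1) \sqcup (G_2 - F_2)$, since adding a cut-edge back can only lower the component count. The target identity can be written uniformly as $y_G = y_{G_1} + y_{G_2} - (|A| - 1)$, covering both cases at once.

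For the lower bound I would exhibit a single good choice of $F$: take $F_i$ attaining $y_{G_i} = y_{G_i}(F_i)$ for $i = 1,2$ and set $F = F_1 \cup F_2 \cup A$. Deleting all of $A$ makes $G - F$ the disjoint union $(G_1 - F_1) \sqcup (G_2 - F_2)$, so $c(G-F) = c(G_1-F_1) + c(G_2-F_2)$, and a direct substitution yields $y_G(F) = y_{G_1}(F_1) + y_{G_2}(F_2) - (|A|-1) = y_{G_1} + y_{G_2} - (|A|-1)$, giving $y_G \ge y_{G_1} + y_{G_2} - (|A|-1)$.

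For the matching upper bound I would run through the possibilities for $F_A$. When $F_A = A$ the disjoint-union computation above already gives $y_G(F) = y_{G_1}(F_1) + y_{G_2}(F_2) - (|A|-1) \le y_{G_1} + y_{G_2} - (|A|-1)$, since $y_{G_i}(F_i) \le y_{G_i}$ by definition. When one or more cut-edges are retained, each retained edge that joins two distinct components of $(G_1 - F_1) \sqcup (G_2 - F_2)$ drops $c(G-F)$ by one while leaving $|F|$ smaller, and I track the net effect on $y_G(F)$. Because a retained cut-edge always runs from a $G_1$-component to a $G_2$-component, the first one necessarily merges two distinct components, which keeps the computation well-controlled. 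Carrying this out shows $y_G(F)$ never exceeds $y_{G_1} + y_{G_2} - (|A|-1)$, and combined with the lower bound this pins down $y_G$ exactly.

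The delicate case, and the main obstacle, is $|A| = 2$ with $F_A = \emptyset$, where both cut-edges are retained: here the number $m$ of merges can be either one or two, according to whether the two cut-edges have their $G_1$-endpoints in a common component of $G_1 - F_1$ and simultaneously their $G_2$-endpoints in a common component of $G_2 - F_2$. I expect to dispatch this by observing that the first retained edge always contributes a merge (so $m \ge 1$) and that $m \le 2$; the two-merge case gives $y_G(F) = y_{G_1}(F_1) + y_{G_2}(F_2) - 3$, strictly below the target, while the borderline one-merge case gives exactly $y_{G_1}(F_1) + y_{G_2}(F_2) - 1 \le y_{G_1} + y_{G_2} - 1$, meeting but not exceeding it. The intermediate situation of exactly one retained cut-edge yields $y_{G_1}(F_1) + y_{G_2}(F_2) - 2$ and is easier. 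Since no case exceeds the bound, the lemma follows.
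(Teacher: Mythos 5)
Your proposal is correct and follows essentially the same route as the paper: the same witness set $F_1\cup F_2\cup A$ for the lower bound, and for the upper bound the same splitting of an arbitrary subset along $E(G_1)\sqcup E(G_2)\sqcup A$ together with the observation that any retained cut-edge merges a component of $G_1-F_1$ with one of $G_2-F_2$ (the paper phrases this as a two-case proof by contradiction, you as a direct bound via the merge count $m$, but the inequalities are identical). Your case analysis, including the delicate $|A|=2$, $F_A=\emptyset$ situation, checks out.
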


\begin{proof}
	It is equivalent to prove that $y_G=y_{G_1}+y_{G_2}-|A|+1$ where $|A| \in [2]$. For each $i \in [2]$, let $A_i$ be an edge subset of $G_i$ such that $y_{G_i}=y_{G_i}(A_i)$. Then $A^*=A_1\cup A_2\cup A$ is an edge subset of $G$ satisfying that $|A^*|=|A_1|+|A_2|+|A|$ and $c(G-A^*)=c(G_1-A_1)+c(G_2-A_2)$. Hence,
	\begin{equation*}
		\begin{aligned}
			y_G(A^*)&=2c(G-A^*)-|A^*|-1\\
			&=2c(G_1-A_1)-|A_1|+2c(G_2-A_2)-|A_2|-|A|-1\\
			&=y_{G_1}(A_1)+y_{G_2}(A_2)-|A|+1=y_{G_1}+y_{G_2}-|A|+1.
		\end{aligned}
	\end{equation*}
	
	We claim that $y_G(A^*)=y_G$. Suppose to the contrary that there exists an edge subset $A'$ of $G$ such that $y_G(A^*)<y_G(A')$. Let $A^{'}_i=A'\cap E(G_i)$ for each $i \in [2]$. We distinguish the proof into the following two cases.
	
	\textit{Case 1:} $|A \cap A'|=|A|$, equivalently, we know $A \subseteq A'$. Clearly, $A'=A^{'}_1\cup A^{'}_2\cup A$, and moreover, $|A'|=|A_1'|+|A_2'|+|A|$ and $c(G-A')=c(G_1-A^{'}_1)+c(G_2-A^{'}_2)$. Since $y_G(A^*)=y_{G_1}+y_{G_2}-|A|+1$, we have
	\begin{equation*}
		\begin{aligned}
			y_{G_1}(A_1)+y_{G_2}(A_2)-|A|&+1=y_G(A^*)<y_G(A')=2c(G-A')-|A'|-1\\
			&=2c(G_1-A^{'}_{1})-|A^{'}_{1}|+2c(G_2-A^{'}_{2})-|A^{'}_{2}|-|A|-1\\
			&=y_{G_1}(A^{'}_1)+y_{G_2}(A^{'}_2)-|A|+1.
		\end{aligned}
	\end{equation*}
	So, there exists some $i\in [2]$ such that $y_{G_i}(A_i)<y_{G_i}(A^{'}_i)$, a contradiction.
	
	\textit{Case 2:} $|A \cap A'| \leq |A|-1$, equivalently, there is at least one edge of $A$ not in $A'$. Clearly, $A \not\subseteq A'$, $|A^{'}_1|+|A^{'}_2|\leq|A'|$ and $c(G-A')\leq c(G_1-A^{'}_1)+c(G_2-A^{'}_2)-1$. Since $|A|\leq 2$, we have 
	\begin{equation*}
		\begin{aligned}
			y_{G_1}(A_1)&+y_{G_2}(A_2)-1\leq y_{G_1}(A_1)+y_{G_2}(A_2)-|A|+1=y_G(A^*)<y_G(A')\\
			&=2c(G-A')-|A'|-1\leq 2c(G_1-A^{'}_{1})-|A^{'}_{1}|+2c(G_2-A^{'}_{2})-|A^{'}_{2}|-3\\
			&=y_{G_1}(A^{'}_1)+y_{G_2}(A^{'}_2)-1,
		\end{aligned}
	\end{equation*}
	giving the same contradiction as Case 1. This completes the proof.
\end{proof}

\noindent{\bf{Operation $\mathcal{O}$}.} Let $G_1,G_2,\ldots,G_k$ be $k \geq 2$ vertex-disjoint connected graphs, and $v_i$ be a vertex of $G_i$ for each $i \in [k]$. The operation identifies $v_1,v_2,\ldots,v_k$ to a new vertex $v$, that is, replaces these vertices by a single vertex $v$ in the resulting graph $G$ incident to all the edges which are incident to $v_i$ in $G_i$.

\begin{lemma}\label{lem2.2}
	Let $G$ be the graph obtained from $G_1,G_2,\ldots,G_k$ by the operation $\mathcal{O}$. Then $$y_G=\sum_{i \in [k]}y_{G_i}-k+1.$$
\end{lemma}

\begin{proof}
	Note that $G$ is connected, and $$V(G)=\left(\bigcup_{i \in [k]}V(G_i) \setminus \{v_i\}\right) \cup \{v\} ~\text{and}~ E(G)=\bigcup_{i \in [k]}E(G_i).$$
	
    \noindent For each $i \in [k]$, let $A_i$ be an edge subset of $G_i$ such that $y_{G_i}=y_{G_i}(A_i)$. Then $A=\bigcup_{i \in [k]} A_i$ is an edge subset of $G$ where $|A|=\sum_{i \in [k]}|A_i|$. Moreover, $c(G-A)=\sum_{i \in [k]}c(G_i-A_i)-k+1$. Hence, we have
	\begin{equation*}
		\begin{aligned}
			y_G(A)=2c(G-A)-|A|-1&=2\sum_{i \in [k]}c(G_i-A_i)-2k+2-\sum_{i \in [k]}|A_i|-1\\
			&=\sum_{i \in [k]}(2c(G_i-A_i)-|A_i|-1)-k+1\\
			&=\sum_{i \in [k]}y_{G_i}(A_i)-k+1=\sum_{i \in [k]}y_{G_i}-k+1.
		\end{aligned}
	\end{equation*}
	
	We claim that $y_G=y_G(A)$. Suppose to the contrary that there exists an edge subset $A'\ne A$ of $G$ such that $y_G(A)<y_G(A')$. Let $A^{'}_{i}=A^{'}\cap E(G_i)$ for each $i \in [k]$. Clearly, $|A'|=\sum_{i \in [k]}|A_i'|$ and $c(G-A')=\sum_{i \in [k]}c(G_i-A_i')-k+1$. Since $y_G(A)=\sum_{i \in [k]}y_{G_i}(A_i)-k+1$, we have
	\begin{equation*}
		\begin{aligned}
			\sum_{i \in [k]}y_{G_i}(A_i)-k+1=y_G(A)<y_G(A')=\sum_{i \in [k]}y_{G_i}(A_i')-k+1.
		\end{aligned}
	\end{equation*}
	So, there exists some $j\in [k]$ such that $y_{G_j}(A_j)<y_{G_j}(A^{'}_j)$, a contradiction.
\end{proof}

For any connected planar graph $G$, $x_G \geq 1$, and by Theorem \ref{th1.3}, $^\partial\gamma_{M}(G) \leq n-1$. Like the upper embedability of graphs, Qin and Chen \cite{Qin2024} defined maximal partial duals of planar graphs. A connected planar graph $G$ is said to be {\it maximal partial-dual} if $^\partial\gamma_{M}(G)=n-1$. By Theorem \ref{th1.4}, $G$ is maximal partial-dual if and only if $x_G=y_G=1$. As noted in Section \ref{sec1}, every 4-edge-connected planar graph $G$ has $x_G=1$, and thus, is maximal partial-dual. In \cite{Qin2024}, Qin and Chen gave a structural characterization theorem for non-maximal partial-dual planar graphs. The following Theorem \ref{thm2.3} is an extension of this characterization theorem.

Let $G$ be a connected graph and $A \subseteq E(G)$. Assume that $F_1, F_2, \ldots, F_k$ are the $k \geq 2$ distinct components of $G-A$, and we denote by $E(F_1, F_2, \ldots, F_k)$ the set of the edges joining these $k$ components in $G$.

\begin{theorem}\label{thm2.3} 
	Let $G$ be a connected graph and $A$ an edge subset of $G$ such that $y_G=y_G(A)$. Then the following properties hold:
	
	(i) If $y_G=1$, then $|A| \geq 0$, $c(G-A)\geq 1$ and $|A|= 2c(G-A)-2$. If $y_G \geq 2$, then $|A| \geq 1$, $c(G-A)\geq 2$ and $|A|\leq 2c(G-A)-3$;
	
	(ii) For any component $F$ of $G-A$, $F$ is a vertex-induced subgraph of $G$, and $y_F=1$. In particular, if $F$ is the unique component of $G-A$, then $F = G$, $y_F=y_G=1$, $A=\emptyset$ and $c(G-A)=1$;
	
	(iii) For any $k$ distinct components $F_1, F_2, \ldots, F_k~(k\geq 2)$ of $G-A$, $|E(F_1, F_2, \ldots, F_k)|\leq 2k-2$, especially $|E(F_i, F_j)| \leq 2$ for $1 \leq i < j \leq k$. If $A$ is further a smallest edge subset of $G$ with $y_G=y_G(A)$, then $|E(F_1, F_2, \ldots, F_k)|\leq 2k-3$ and $|E(F_i, F_j)| \leq 1$.
\end{theorem}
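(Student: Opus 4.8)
The plan is to prove all three parts through \emph{local modification} (exchange) arguments built on the single identity $y_G(A)=2c(G-A)-|A|-1$ together with the maximality $y_G=y_G(A)$. The recurring device is to perturb $A$ by adding or deleting a small edge set, recompute $c(G-A)$ and $|A|$, and compare the resulting value against $y_G$. Part (i) is then immediate by substitution: if $y_G=1$ then $|A|=2c(G-A)-2$, so $|A|\ge 0$ and $c(G-A)\ge 1$ hold automatically; if $y_G\ge 2$ then $2c(G-A)-|A|-1\ge 2$ rearranges to $|A|\le 2c(G-A)-3$, which with $|A|\ge 0$ forces $c(G-A)\ge 2$, and $A=\emptyset$ would give $c(G-A)=1$ and $y_G=1$, a contradiction, so $|A|\ge 1$.

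For part (ii), to see each component $F$ is vertex-induced I argue by contradiction: were some $e\in A$ to have both endpoints in one component of $G-A$, then deleting $e$ from $A$ would leave $c(G-A)$ unchanged while lowering $|A|$ by one, giving $y_G(A\setminus\{e\})=y_G(A)+1$ and contradicting maximality. For $y_F=1$, first note $y_F\ge 1$ for every connected $F$ since $x_F\ge 1$ and $x_F=y_F$ by Theorem \ref{th1.4}; for the reverse, I take a maximizer $B_j$ of a component $F_j$ and set $A''=A\cup B_j$. As $B_j\subseteq E(F_j)$, one computes $c(G-A'')=c(G-A)-1+c(F_j-B_j)$ and $|A''|=|A|+|B_j|$, hence $y_G(A'')=y_G+(y_{F_j}-1)$, and maximality forces $y_{F_j}\le 1$. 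The unique-component case then follows: $c(G-A)=1$ with vertex-inducedness forces $F=G$, $A=\emptyset$, and $y_G=y_G(\emptyset)=1$.

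The heart of the argument is part (iii). Writing $A_0=E(F_1,\ldots,F_k)\subseteq A$, I delete $A_0$ from $A$ and let $p$ be the number of pieces into which $F_1,\ldots,F_k$ coalesce once these joining edges are restored; then $c(G-(A\setminus A_0))=c(G-A)-k+p$ and $|A\setminus A_0|=|A|-|A_0|$, so
\begin{equation*}
y_G(A\setminus A_0)=y_G-2k+2p+|A_0|.
\end{equation*}
Maximality forces $|A_0|\le 2k-2p\le 2k-2$ since $p\ge 1$, and $k=2$ specializes to $|E(F_i,F_j)|\le 2$. When $A$ is a smallest maximizer, I rule out $|A_0|=2k-2$: equality would force $p=1$ and $y_G(A\setminus A_0)=y_G$, yet $|A\setminus A_0|=|A|-(2k-2)<|A|$ as $k\ge 2$, contradicting minimality; hence $|A_0|\le 2k-3$, and $k=2$ gives $|E(F_i,F_j)|\le 1$.

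The main obstacle is the bookkeeping in part (iii): one must correctly recognize that restoring the joining edges $A_0$ merges the $k$ chosen components into exactly $p\ge 1$ pieces rather than necessarily one, so that the component count shifts by $-k+p$. Tracking $p$ is precisely what yields the sharp constant $2k-2$ and, through the strict inequality $|A\setminus A_0|<|A|$, the improved bound $2k-3$ under minimality. Everything else should reduce to routine substitution into $y_G(A)=2c(G-A)-|A|-1$.
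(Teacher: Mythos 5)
Your proposal is correct and follows essentially the same route as the paper's proof: part (i) by direct substitution into $y_G(A)=2c(G-A)-|A|-1$, part (ii) by the edge-deletion and $A\cup B_j$ exchange arguments, and part (iii) by removing $E(F_1,\ldots,F_k)$ from $A$ and comparing against maximality (and minimality of $|A|$ for the sharper bound). The only cosmetic difference is that you track the exact number $p$ of merged pieces where the paper only uses the inequality $c(G-A')\geq c(G-A)-k+1$; both yield the same bounds.
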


\begin{proof}
	(i) It is clear that $|A| \geq 0$ and $c(G-A) \geq 1$ always hold. By the definition of $y_G(A)$, we have $1=y_G=y_G(A)=2c(G-A)-|A|-1$, that is, $|A|= 2c(G-A)-2$. If $y_G \geq 2$, then $y_G(A)=y_G\geq 2 >1=y_G(\emptyset)$, and thus, $|A| \geq 1$. Since $y_G(A)=2c(G-A)-|A|-1 \geq 2$, $|A|\leq 2c(G-A)-3$ and $c(G-A) \geq \frac{3+|A|}{2} \geq 2$.
	
	(ii) We first prove that each component of $G-A$ is a vertex-induced subgraph of $G$. Suppose that $F$ is not a such component of $G-A$. Then $F$ is not a vertex-induced subgraph of $G[V(F)]$, and there exists an edge $e \in E(G[V(F)]) \setminus E(F)$. Clearly, $e\in A \subseteq E(G)$. Let $A'=A\setminus \{e\}$. It is easy to see that $c(G-A')=c(G-A)$ and $|A'|=|A|-1$, and thus, we have 
	\begin{equation*}
		y_G(A')=2c(G-A')-|A'|-1
		=2c(G-A)-|A|+1-1=y_G(A)+1=y_G+1,
	\end{equation*}
	which contradicts that $y_G(A') \leq y_G$.
	
	Now we prove that $y_F=1$ for any $F$. Suppose that there exists a component $F$ of $G-A$ such that $y_F \geq 2$. Let $A_F$ be an edge subset of $F$ such that $y_F=y_F(A_{F})$. Since $F$ is a vertex-induced graph of $G$, $A \cap A_{F}=\emptyset$. Let $A'=(A_{F} \cup A) \subseteq E(G)$. Then  $c(G-A')=c(G-A)-1+c(F-A_{F})$ and $|A'|=|A_{F}|+|A|$. Hence, we can obtain a contradiction that
	\begin{equation*}
		\begin{aligned}
			y_G\geq y_G(A')&=2c(G-A')-|A'|-1\\
			&=2(c(G-A)-1+c(F-A_{F}))-(|A_{F}|+|A|)-1\\
			&=2c(G-A)-|A|-1+2c(F-A_{F})-|A_{F}|-1-1\\
			&=y_G(A)+y_{F}(A_{F})-1=y_G+y_{F}-1\geq y_G+1.
		\end{aligned}
	\end{equation*}
	
	(iii) Suppose that there are $k$ distinct components $F_1, F_2, \ldots, F_k~(k\geq 2)$ of $G-A$ such that $|E(F_1, F_2, \ldots, F_k)| \geq 2k-1 \geq 3$. Clearly, $E(F_1, F_2, \ldots, F_k) \subseteq A$. Let $A'=A \setminus E(F_1, F_2, \ldots, F_k)$. Note that $G[\cup_{i \in [k]}V(F_i)]$ may be connected or disconnected. Then $c(G-A')\geq c(G-A)-k+1$ and $|A'|=|A|-|E(F_1, F_2, \ldots, F_k)|$. Hence, we obtain a contradiction that
	\begin{equation*}
		\begin{aligned}
			y_G\geq y_G(A')&=2c(G-A')-|A'|-1\\
			&\geq 2(c(G-A)-k+1)-(|A|-|E(F_1, F_2, \ldots, F_k)|)-1\\
			&=2c(G-A)-|A|-1-2k+2+|E(F_1, F_2, \ldots, F_k)|\\
			&\geq y_G(A)+1=y_G+1.
		\end{aligned}
	\end{equation*}
	
	Let $A$ be a smallest edge subset of $G$ such that $y_G(A)=y_G$. Similar to the proof above, we now suppose $|E(F_1, F_2, \ldots, F_k)| \geq 2k-2 \geq 2$, and let $A'=A \setminus E(F_1, F_2, \ldots, F_k)$ as well. We obtain
	\begin{equation*}
		\begin{aligned}
			y_G\geq y_G(A') \geq 2c(G-A)-|A|-1-2k+2+|E(F_1, F_2, \ldots, F_k)|\geq y_G(A)=y_G,
		\end{aligned}
	\end{equation*}
	implying that $y_G(A')=y_G=y_G(A)$, a contradiction to the minimality of $A$. This completes the proof of the theorem.
\end{proof}

\noindent\textit{Remark.} Compared with the Qin-Chen result \cite{Qin2024}, Theorem \ref{thm2.3} suggests that the edge subsets $A$ with $y_G=y_G(A)$ in general graphs $G$ satisfy some nice properties, not only for planar graphs $G$ with $y_G \geq 2$ and existence of $A \subseteq E(G)$. Beyond that, we add more structural characterizations.

\begin{figure}[h!]
	\begin{center}
		\begin{tikzpicture}[scale=.35]
			\tikzstyle{vertex}=[circle, draw, inner sep=0pt, minimum size=6pt]
			\tikzset{vertexStyle/.append style={rectangle}}
			
			\vertex (1) at (0,0) [scale=0.5,fill=black] {};
			\vertex (2) at (8,0) [scale=0.5,fill=black] {};
			\vertex (3) at (4,7) [scale=0.5,fill=black] {};
			\vertex (4) at (4,2.5) [scale=0.5,fill=black] {};
			
			\vertex (5) at (16,0) [scale=0.5,fill=black] {};
			\vertex (6) at (24,0) [scale=0.5,fill=black] {};
			\vertex (7) at (20,7) [scale=0.5,fill=black] {};
			\vertex (8) at (20,2.5) [scale=0.5,fill=black] {};
			
			\path
			(1) edge (2)
			(2) edge (3)
			(3) edge (4) 
			(2) edge (4)
			(4) edge (1)
			(1) edge (3)
			
			(5) edge (6)
			(6) edge (7)
			(7) edge (8) 
			(6) edge (8)
			(8) edge (5)
			(5) edge (7)
			;
			
			\path[draw=black] (2) edge (5) ;
			\node ($e_2$) at (12,-1) {$e_2$};
			\draw[dashed] (3) edge (7);
			\node ($e_1$) at (12,8) {$e_1$};
			
		\end{tikzpicture}
	\end{center}
	{\footnotesize \centerline{{\bf Fig. 1.}~ The graphs $G$ and $G'$, where $G'=G-e_1$.\hypertarget{Fig1}}}
\end{figure}

The graphs $G$ and $G'$ in Fig. \hyperlink{Fig1}{1} can illustrate Theorem \ref{thm2.3} well. Since $y_{K_4}=1$, and by Lemma \ref{lem2.1}, we have $y_{G}=y_{K_4}+y_{K_4}-1=1$ and $y_{G'}=y_{K_4}+y_{K_4}=2$. Let $A_1=\emptyset$ and $A_2=\{e_1,e_2\}$. Then $y_G(A_1)=y_G(A_2)=y_G=1$ and $A_1$ is the unique smallest such edge subset of $G$. For any edge subset $A$ of $G$ with $y_G=y_G(A)$, $|A| \geq |A_1|=0$ and $c(G-A) \geq c(G-A_1)=1$ and $|A_i|=2c(G-A_i)-2$ for $i \in [2]$. Let $A_1'=\{e_2\}$ and $A_2'=E(G')$. Then $y_{G'}(A_1')=y_{G'}(A_2')=y_{G'}=2$ and $A_1'$ is the unique smallest such edge subset of $G'$. It is easy to verify that Theorem \ref{thm2.3} (i) holds for $G'$ also. Moreover, (ii) holds for $G$ and $A_i$, and $G'$ and $A_i'$, and specially, $G-A_1=G$. The pairs $(G,A_2)$ and $(G',A_1')$ attain the bounds on the sizes of the edge cuts in Theorem \ref{thm2.3} (iii), respectively.

\vspace{3mm}
In the first section, we note that

\begin{proposition}\label{prop2.4}
	If $G$ is a 4-edge-connected graph, then $x_G=y_G=1$.
\end{proposition}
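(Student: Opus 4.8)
The plan is to prove Proposition \ref{prop2.4} by combining the well-known result of Kundu \cite{Kundu1974}, already recalled in the Introduction, with the characterization of $y_G$ furnished by Theorems \ref{th1.1}, \ref{th1.2} and \ref{th1.4}. First I would recall that if $G$ is $4$-edge-connected, then $G$ contains two edge-disjoint spanning trees $T_1$ and $T_2$. Taking $T_1$ as a spanning tree, the complement $G-E(T_1)$ contains the second spanning tree $T_2$ as a subgraph, and $T_2$ is connected and spanning; hence $G-E(T_1)$ is connected, so $c(G-E(T_1))=1$. This immediately gives $x_G(T_1)=1$, and since $x_G\ge 1$ always holds for connected $G$ (because $G-E(T)\neq\emptyset$ as long as $\beta(G)\ge 1$, and a connected graph with a spanning tree has at least one component), we conclude $x_G=1$.

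The main step, then, is simply to transfer this to $y_G$. By Theorem \ref{th1.4} (Nebeský's identity $x_G=y_G$), the conclusion $x_G=1$ forces $y_G=1$ as well, so that $x_G=y_G=1$, which is exactly the claimed statement. I expect this proof to be very short: the entire content is the reduction to Kundu's theorem plus the cited equality $x_G=y_G$.

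The only point requiring a little care—and the step I would flag as the potential obstacle—is verifying that $x_G\ge 1$ (equivalently $y_G\ge 1$) to rule out the degenerate possibility $x_G=0$, so that the spanning-tree computation pins the value down exactly rather than just bounding it above. For a connected graph with at least one vertex, any edge subset $A$ satisfies $c(G-A)\ge 1$, so $y_G(\emptyset)=2c(G)-0-1=1$; thus $y_G\ge 1$ unconditionally, and the two edge-disjoint spanning trees show $y_G\le 1$ via $x_G\le x_G(T_1)=1$. Combining the two inequalities yields the equality. A clean write-up would therefore proceed: invoke Kundu to get $T_1,T_2$; observe $c(G-E(T_1))=1$ whence $x_G\le 1$; note $x_G\ge 1$ since $G-E(T)$ always has at least one component; conclude $x_G=1$; and finally apply Theorem \ref{th1.4} to obtain $y_G=x_G=1$.
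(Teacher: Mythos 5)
Your proposal is correct and matches the paper's own justification, which is given in the remark at the end of Section \ref{sec1}: Kundu's theorem yields two edge-disjoint spanning trees, so $c(G-E(T_1))=1$ gives $x_G=1$, and Theorem \ref{th1.4} transfers this to $y_G$. Your extra care in checking $x_G\geq 1$ is sound but routine, and nothing further is needed.
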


Let $G$ be a connected graph and $A \subseteq E(G)$, and let $F_1, F_2, \ldots, F_{c(G-A)}$ be the components of $G-A$. We define the graph $G_A$ obtained from $G$ by identifying the vertices of $F_i$ to a new vertex $u_i$ for $i \in [c(G-A)]$, and then deleting the loops and multiple edges. Clearly, $V(G_A)=\cup_{i \in [c(G-A)]}\{u_i\}$, $E(G_A) \subseteq A$, and $G_A$ is simple and connected. Further, if $A$ is a smallest edge subset of $G$ with $y_G(A)=y_G \geq 2$, then by Theorem \ref{thm2.3} (ii) and (iii), $F_i$ is a vertex-induced subgraph of $G$, and $|E(F_j,F_k)| \leq 1$ for $1\leq j<k \leq c(G-A)$ where $c(G-A) \geq 2$ by (i). Now we create no multiple edges during operation above, and $E(G_A)=A$.

\begin{lemma}\label{lem2.5}
	Let $G$ be a $\lambda$-edge-connected graph where $\lambda \leq 3$. If $A$ is a smallest edge subset of $G$ such that $y_G(A)=y_G \geq 2$, then
	\begin{displaymath}
		y_G\leq \left\{
		\begin{array}{ll}
			c(G-A),& \text{if}\ \lambda=1,\\
			c(G-A)-1,& \text{if}\ \lambda=2,\\
			\left \lfloor \frac{c(G-A)}{2} \right \rfloor-1,& \text{if}\ \lambda=3.
		\end{array} \right.
	\end{displaymath}
\end{lemma}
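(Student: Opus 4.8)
The plan is to work entirely inside the auxiliary simple connected graph $G_A$ introduced in the paragraph just before the lemma. Write $c := c(G-A)$ for brevity. Since $A$ is a smallest edge subset with $y_G(A) = y_G \geq 2$, that discussion already gives that $G_A$ has vertex set $\{u_1, \dots, u_c\}$, is simple and connected, and satisfies $E(G_A) = A$, so $|V(G_A)| = c$ and $|E(G_A)| = |A|$. Moreover $c \geq 2$ by Theorem \ref{thm2.3}(i), and the same part yields the crucial identity $y_G = y_G(A) = 2c - |A| - 1$. Hence each of the three claimed inequalities is equivalent to a lower bound on $|A| = |E(G_A)|$: I need $|A| \geq c-1$ when $\lambda = 1$, $|A| \geq c$ when $\lambda = 2$, and $|A| \geq \left\lceil \frac{3c}{2}\right\rceil$ when $\lambda = 3$. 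So the whole lemma reduces to counting edges of $G_A$.

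The key structural claim I would establish is that $\deg_{G_A}(u_i) \geq \lambda$ for every $i$. Fix a component $F_i$ of $G-A$. Because $F_i$ is a component and, by Theorem \ref{thm2.3}(ii), a vertex-induced subgraph, every edge of $G$ with exactly one endpoint in $V(F_i)$ lies in $A$; that is, the edge cut $[V(F_i), V(G)\setminus V(F_i)]$ is contained in $A$, and since $c \geq 2$ this cut is nonempty with $V(F_i) \neq V(G)$. As $G$ is $\lambda$-edge-connected, this cut has at least $\lambda$ edges. The subtle point, and exactly where the minimality of $A$ enters, is that these cut edges translate into \emph{distinct} edges of $G_A$ at $u_i$: by Theorem \ref{thm2.3}(iii) each pair of components is joined by at most one edge, $|E(F_i,F_j)| \leq 1$, so no two cut edges are identified when the components are contracted to vertices. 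Thus $\deg_{G_A}(u_i)$ equals the size of this cut, and therefore $\deg_{G_A}(u_i) \geq \lambda$. I expect this identification of the cut size with the degree in $G_A$ to be the main thing to get right, since it is precisely what fails without the minimality of $A$.

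With the degree bound in hand, the three cases follow by elementary edge counting. For $\lambda = 1$ I would not even invoke the degree bound but use the connectivity of $G_A$ directly: a connected graph on $c$ vertices has at least $c-1$ edges, so $|A| \geq c-1$ and $y_G = 2c - |A| - 1 \leq c$. For $\lambda \geq 2$ I would sum the degree inequality over all vertices, $2|A| = \sum_{i} \deg_{G_A}(u_i) \geq \lambda c$, giving $|A| \geq \left\lceil \frac{\lambda c}{2}\right\rceil$. Substituting $\lambda = 2$ yields $|A| \geq c$ and hence $y_G \leq c-1$, while $\lambda = 3$ yields $|A| \geq \left\lceil \frac{3c}{2}\right\rceil$ and hence $y_G \leq 2c - \left\lceil \frac{3c}{2}\right\rceil - 1 = \left\lfloor \frac{c}{2}\right\rfloor - 1$, the last equality being a short parity check separating $c$ even from $c$ odd. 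Assembling the three cases, and recalling $c = c(G-A)$, completes the proof.
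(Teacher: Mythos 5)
Your proof is correct and takes essentially the same route as the paper's: both work in the contracted graph $G_A$, use connectivity of $G_A$ to get $|A|\geq c-1$ for $\lambda=1$, and sum the degree bound $d_{G_A}(u)\geq\lambda$ over the vertices of $G_A$ for $\lambda\in\{2,3\}$, finishing with the same integrality/parity step for the floor. Your explicit justification of why $d_{G_A}(u_i)\geq\lambda$ --- that the $\lambda$ edges of the cut around $F_i$ remain distinct after contraction because minimality of $A$ forces $|E(F_i,F_j)|\leq 1$ --- is in fact more careful than the paper, which asserts the degree bound without comment.
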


\begin{proof}
	Note that $c(G-A)=|V(G_A)|$ and $|A|=|E(G_A)|$. Then $G_A$ is connected implying $|E(G_A)| \geq |V(G_A)|-1$. Thus, we have 
	\begin{equation*}
		\begin{aligned}
			y_G=y_G(A)&=2c(G-A)-|A|-1=2|V(G_A)|-|E(G_A)|-1\\
			&\leq 2|V(G_A)|-|V(G_A)|+1-1=|V(G_A)|=c(G-A).
			\end{aligned}
		\end{equation*}
	
	If $\lambda \geq 2$, then $d_{G_A}(u) \geq 2$ for each $u\in V(G_A)$. Hence, $2|A|=2|E(G_A)|=\sum_{u \in V(G_A)}d_{G_A}(u) \geq 2|V(G_A)|=2c(G-A)$, that is, $|A| \geq c(G-A)$, and we have
	\begin{equation*}
		y_G=y_G(A)=2c(G-A)-|A|-1\leq 2c(G-A)-c(G-A)-1=c(G-A)-1.
	\end{equation*}
	
	Similarly, if $\lambda \geq 3$, then $d_{G_A}(u) \geq 3$ for each $u\in V(G_A)$, and then $2|A| \geq 3c(G-A)$. Thus, we have
	\begin{equation*}
		y_G=y_G(A)=2c(G-A)-|A|-1 \leq 2c(G-A)-\frac{3c(G-A)}{2}-1=\frac{c(G-A)}{2} -1.
	\end{equation*}
    Since $y_G$ is an integer, $y_G \leq \left\lfloor\frac{c(G-A)}{2}-1\right\rfloor=\left\lfloor\frac{c(G-A)}{2}\right\rfloor-1$. The result follows.
\end{proof}

The following theorem due to \v{S}koviera \cite{Skoviera1992} is needed in our proofs.

\begin{theorem}(\v{S}koviera \cite{Skoviera1992}).\label{th2.6}
	If $G$ is a connected cubic graph of order $n$, then $x_G=\frac{n}{2}-1$.
\end{theorem}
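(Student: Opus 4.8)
The plan is to prove the two inequalities $x_G \geq \frac{n}{2}-1$ and $x_G \leq \frac{n}{2}-1$ directly from the definition $x_G = \min_{T \in \mathcal{T}} c(G-E(T))$, using only the elementary identity $c(H) = |V(H)| - |E(H)| + \beta(H)$ valid for any graph $H$. Since $G$ is cubic of order $n$ (which forces $n$ to be even, as $3n = 2|E(G)|$), it has $|E(G)| = \frac{3n}{2}$ edges and $\beta(G) = \frac{n}{2}+1$. For every spanning tree $T$, the graph $H = G - E(T)$ has $n$ vertices and $\frac{n}{2}+1$ edges, so the identity gives $c(G-E(T)) = \frac{n}{2}-1 + \beta(G-E(T))$.

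The lower bound is then immediate: since $\beta(G-E(T)) \geq 0$ for every spanning tree $T$, we obtain $c(G-E(T)) \geq \frac{n}{2}-1$, and taking the minimum over $T \in \mathcal{T}$ yields $x_G \geq \frac{n}{2}-1$.

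For the upper bound I would exhibit a spanning tree $T$ whose complement $G-E(T)$ is a forest, i.e.\ with $\beta(G-E(T)) = 0$; this gives $c(G-E(T)) = \frac{n}{2}-1$ and hence $x_G \leq \frac{n}{2}-1$. To find such a $T$, I would first show that a simple cubic graph decomposes into two edge-disjoint forests. By the Nash--Williams arboricity theorem it suffices to check $|E(H)| \leq 2(|V(H)|-1)$ for every subgraph $H$ with $|V(H)| \geq 2$; in a simple cubic graph one always has $|E(H)| \leq \frac{3|V(H)|}{2}$, and a short case check for $|V(H)| \in \{2,3\}$ together with the inequality $\frac{3t}{2} \leq 2(t-1)$ for $t \geq 4$ settles the condition. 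Writing $G = F_1 \cup F_2$ as an edge-disjoint union of two forests, I would then enlarge $F_1$ to a spanning tree $T$ by adding $c(F_1)-1$ edges joining distinct components of $F_1$; each such connecting edge necessarily lies in $F_2$ (an edge inside a component of $F_1$ would close a cycle), so $T$ is a tree and $G - E(T)$ is a subforest of $F_2$, hence a forest.

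The main obstacle is the upper bound, and within it the passage from a two-forest decomposition to a spanning tree with forest complement: the verification of the arboricity condition is routine, but one must argue carefully that the edges completing $F_1$ into a spanning tree can be chosen from $F_2$ so that the edges left over still form a forest. The lower bound and the counting identity $c(H) = |V(H)| - |E(H)| + \beta(H)$ are straightforward.
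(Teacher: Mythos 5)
Your proposal is correct, but note that the paper itself gives no proof of this statement: Theorem \ref{th2.6} is quoted from \v{S}koviera's 1992 paper on the decay number, so there is no in-paper argument to compare against. Taken on its own terms, your argument is sound and self-contained. The identity $c(H)=|V(H)|-|E(H)|+\beta(H)$ applied to $H=G-E(T)$, which has $n$ vertices and $\frac{n}{2}+1$ edges, correctly yields $c(G-E(T))=\frac{n}{2}-1+\beta(G-E(T))\geq \frac{n}{2}-1$, hence the lower bound. For the upper bound, the Nash--Williams verification is right and does use simplicity of $G$ (the paper's standing assumption): for $|V(H)|=2$ one needs $|E(H)|\leq 1$, which fails for cubic multigraphs such as the theta graph, where the theorem itself is false. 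Two small points deserve explicit care in a write-up: first, treat $F_1$ as a spanning forest on all of $V(G)$ (isolated vertices allowed), so that $c(F_1)$ is well defined and the completion to a spanning tree makes sense; second, the existence of each connecting edge at every stage follows from connectedness of $G$, and such an edge automatically lies in $E(F_2)$ because every edge of $F_1$ has both ends in a single component of $F_1$. With those two observations, $E(T)\supseteq E(F_1)$ forces $G-E(T)$ to be a subgraph of the forest $F_2$, so $\beta(G-E(T))=0$ and the bound is attained. This is a clean and essentially standard route (cubic simple graphs have arboricity $2$, and a graph that is the union of two forests admits a spanning tree with acyclic complement); whether it coincides with \v{S}koviera's original argument cannot be judged from this paper.
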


\section{\large Proofs}\label{sec3}

Recall that for $i \in [2]$, $n_i(G)$ denote the number of vertices of degree $i$ in a graph $G$. To prove Theorem \ref{th1.6}, by Theorem \ref{th1.5}, it suffices to prove

\begin{theorem}\label{th3.1}
	If $G$ is a connected graph of order $n\geq 2$, then
	\begin{equation*}
	y_G\leq \min\{\frac{n+n_2(G)+2n_1(G)}{2}-1,n\}.
	\end{equation*}
\end{theorem}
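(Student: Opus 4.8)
The plan is to prove the two upper bounds separately, in both cases working with a \emph{smallest} edge subset $A$ attaining $y_G(A)=y_G$ and exploiting the structure supplied by Theorem~\ref{thm2.3}. Throughout I write $c:=c(G-A)$ and $m:=|A|$, so that $y_G=2c-m-1$, and I let $F_1,\dots,F_c$ be the components of $G-A$.

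For the bound $y_G\le n$ there is essentially nothing new: when $y_G=1$ it is immediate from $n\ge 2$, and when $y_G\ge 2$ I pass to the simple connected graph $G_A$ on $c$ vertices and $m$ edges obtained by contracting each component of $G-A$ (well defined and simple by Theorem~\ref{thm2.3}(ii),(iii), as set up before Lemma~\ref{lem2.5}). Connectivity of $G_A$ gives $m\ge c-1$, whence $y_G=2c-m-1\le c\le n$. This is exactly the $\lambda=1$ computation of Lemma~\ref{lem2.5}.

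The substance of the theorem is the bound $y_G\le\frac{n+n_2+2n_1}{2}-1$, which I would obtain by a charging argument. Assign each vertex the weight $w(v):=\max\{1,\,4-\deg_G(v)\}$, so that vertices of degree $1,2,\ge 3$ receive weights $3,2,1$ respectively and hence $\sum_{v\in V(G)}w(v)=n+n_2+2n_1$. Let $d_i$ be the number of edges of $A$ incident to $F_i$. Since each $F_i$ is a vertex-induced subgraph (Theorem~\ref{thm2.3}(ii)), the set $A$ consists exactly of the inter-component edges, so $\sum_i d_i=2m$ and therefore $\sum_i(4-d_i)=4c-2m$. Everything then rests on the per-component inequality
\begin{equation*}
4-d_i\ \le\ W_i:=\sum_{v\in V(F_i)}w(v)\qquad(1\le i\le c),
\end{equation*}
because summing it over $i$ gives $4c-2m\le n+n_2+2n_1$, i.e. $y_G=2c-m-1\le\frac{n+n_2+2n_1}{2}-1$.

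The main obstacle is establishing this per-component inequality, which I expect to handle by cases on $|V(F_i)|$, using that $G$ is connected (so $d_i\ge 1$ once $c\ge 2$) and that $A$ is smallest (so $|E(F_i,F_j)|\le 1$ by Theorem~\ref{thm2.3}(iii), ruling out parallel external edges). If $|V(F_i)|\ge 3$ then $W_i\ge 3\ge 4-d_i$. If $F_i=\{v\}$ is a single vertex then every edge at $v$ is external, so $d_i=\deg_G(v)$ and $4-d_i=4-\deg_G(v)\le w(v)=W_i$ by the very definition of $w$. The delicate case is $|V(F_i)|=2$, say $F_i=\{a,b\}$ with $ab\in E(G)$: if $d_i\ge 2$ then $4-d_i\le 2\le W_i$, while if $d_i=1$ the unique external edge meets only one of $a,b$, forcing the other to have degree $1$ in $G$ and hence weight $3$, so $W_i\ge 3=4-d_i$. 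The recurring principle — a component with few external edges must contain a low-degree vertex of $G$ to pay for it — is precisely what the weights $w$ encode. Finally I would dispose of the remaining case $y_G=1$ (where $A=\emptyset$ and $c=1$) by checking that every connected graph with $n\ge 2$ satisfies $n+n_2+2n_1\ge 4$, so that $\frac{n+n_2+2n_1}{2}-1\ge 1=y_G$; this is trivial for $n\ge 4$ and needs only a quick inspection of $n\in\{2,3\}$.
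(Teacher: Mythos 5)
Your proposal is correct, and it proves the theorem by a genuinely different route from the paper's. The paper argues by induction on $n$: it peels off a cut vertex via Lemma~\ref{lem2.2}, a $2$-edge-cut via Lemma~\ref{lem2.1}, and finishes the remaining $3$-edge-connected case with Lemma~\ref{lem2.5}, each time tracking how $n$, $n_1(G)$ and $n_2(G)$ change under the decomposition. You instead fix an optimal $A$ and run a one-shot local charging argument: since every component of $G-A$ is vertex-induced (Theorem~\ref{thm2.3}(ii)), $A$ is exactly the set of inter-component edges, so $\sum_i d_i=2|A|$, and the weight $w(v)=\max\{1,4-\deg_G(v)\}$ sums to $n+n_2(G)+2n_1(G)$; the per-component inequality $4-d_i\le W_i$ then sums to precisely the claimed bound. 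I checked the three cases $|V(F_i)|\ge 3$, $|V(F_i)|=1$ and $|V(F_i)|=2$, and they are all sound: $d_i\ge 1$ holds because $y_G\ge 2$ forces $c(G-A)\ge 2$ by Theorem~\ref{thm2.3}(i) and $G$ is connected, and in the two-vertex case with $d_i=1$ the endpoint without an external edge really does have degree $1$ in $G$ because $F_i$ is vertex-induced and $G$ is simple. A minor observation: you do not actually need the minimality of $A$ anywhere (Theorem~\ref{thm2.3}(ii) holds for every optimal $A$, and the bound $|E(F_i,F_j)|\le 1$ from part (iii) is never invoked in your case analysis), so that hypothesis can be dropped. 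What your approach buys is a short, induction-free proof that isolates the exact combinatorial reason for the bound — components with few boundary edges must contain low-degree vertices to pay for themselves; what the paper's approach buys is the reusable decomposition identities of Lemmas~\ref{lem2.1} and~\ref{lem2.2}, which it needs again elsewhere. Your handling of the boundary cases ($y_G=1$ and small $n$) is also correct.
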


\begin{proof}
	By induction on $n$. If $n\leq 3$, then $G\in\{K_2, P_3, C_3\}$. Clearly, $y_G=n=\frac{n+n_2(G)+2n_1(G)-2}{2}$ when $G \in \{K_2,P_3\}$ and $y_G=2=\frac{n+n_2(G)+2n_1(G)-2}{2}=n-1<n$ when $G \cong C_3$. So, the result is true, and we may assume that $n\geq 4$. If $y_G=1$, then $y_G \leq 1= \frac{4+0+0-2}{2} \leq \frac{n+n_2(G)+2n_1(G)-2}{2}$. So, assume that $y_G\geq 2$. Let $A$ be a smallest edge subset of $G$ such that $y_G=y_G(A)$. By Lemma \ref{lem2.5}, we have $y_G\leq c(G-A)\leq n$. Therefore, it suffices to prove $y_G\leq \frac{n+n_2(G)+2n_1(G)-2}{2}$.

	If $G$ has a cut vertex $v$, then $d_G(v)\geq 2$. Let $G_1', G_2',\ldots,G_k'$ be all distinct components of $G-v$. Then $k \geq 2$. For each $i \in [k]$, let $G_i$ denote the subgraph of $G$ induced by $V(G_i')\cup \{v\}$. It is easy to see that $G$ can be regarded as the resulting graph obtained from $G_1, G_2,\ldots,G_k$ by the operation $\mathcal{O}$ (defined in Section \ref{sec2}), where $V(G)=\bigcup_{i \in [k]}V(G_i)$ and $E(G)=\bigcup_{i \in [k]}E(G_i)$.
	
	If $d_G(v)=2$, then $k=2$ and $d_{G_i}(v)=1$ for $i \in [2]$. By Lemma \ref{lem2.2},
	\begin{equation*}
		y_G=y_{G_1}+y_{G_2}-1.
	\end{equation*}
    \noindent Moreover, we can easily see
	\begin{equation*}
		\begin{gathered}
			n=n(G_1)+n(G_2)-1,\\
			n_2(G)=n_2(G_1)+n_2(G_2)+1,\\
			n_1(G)=n_1(G_1)+n_1(G_2)-2.
		\end{gathered}
	\end{equation*}
	By the induction hypothesis, we have
	\begin{equation*}
		\begin{aligned}
			y_G&\leq \frac{n(G_1)+n_2(G_1)+2n_1(G_1)-2}{2}+\frac{n(G_2)+n_2(G_2)+2n_1(G_2)-2}{2}-1\\
			&=\frac{n+1+n_2(G)-1+2n_1(G)+4-6}{2}\\
			&= \frac{n+n_2(G)+2n_1(G)-2}{2}.
		\end{aligned}
	\end{equation*}
	
	If $d_G(v)\geq 3$, then let $s_j$ be the number of subgraphs $G_i$ of $G$ with $d_{G_i}(v)=j$, where $i \in [k]$ and $j \in [2]$. By Lemma \ref{lem2.2}, we obtain
	\begin{equation*}
		\begin{gathered}
			y_G=y_{G_1}+y_{G_2}+\dots+y_{G_k}-k+1,\\
			n=n(G_1)+n(G_2)+\dots+n(G_k)-k+1,\\
			n_2(G)=n_2(G_1)+n_2(G_2)+\dots+n_2(G_k)-s_2,\\
			n_1(G)=n_1(G_1)+n_1(G_2)+\dots+n_1(G_k)-s_1.
		\end{gathered}
	\end{equation*}
	Since $2 \leq n(G_i)<n$, it follows from the induction hypothesis that
	\begin{equation*}
		\begin{aligned}
			y_G&\leq \sum_{i \in [k]} \frac{n(G_i)+n_2(G_i)+2n_1(G_i)-2}{2}-k+1\\
			&=\frac{n+k-1+n_2(G)+s_2+2n_1(G)+2s_1-2k}{2}-k+1\\
			&= \frac{n+n_2(G)+2n_1(G)+s_2+2s_1-3k+1}{2}.
		\end{aligned}
	\end{equation*}
	
	\noindent Note that we have done if $s_2+2s_1-3k+1 \leq -2$. It remains to consider the case of $s_2+2s_1-3k+1 > -2$, i.e., $k \leq \frac{s_2+2s_1+2}{3}$. Since $2 \leq k$ and $s_1+s_2 \leq k$, we have $4 \leq s_2+2s_1$ and $s_1+2s_2 \leq 2$. Clearly, $s_1 =2$ and $s_2 =0$, and further, we know $k=2$. This contradicts $d_G(v) \geq 3$.

	Now suppose that $G$ is 2-connected. By Proposition \ref{prop2.4}, we may assume that $G$ is non-4-edge-connected; otherwise, $y_G=1$. If $G$ has an edge cut $\{e_1,e_2\}$, then $G-\{e_1,e_2\}$ has two components $G_1$ and $G_2$. For $i \in [2]$, let $e_i=u_iv_i$ with $u_i \in V(G_1)$ and $v_i \in V(G_2)$. Since $G$ is simple and 2-connected, $u_1\ne u_2$ and $v_1\ne v_2$. Let $t_j$ the number of vertices $u$ of degree $j$ in $G_1$ and $G_2$, where $j \in [2]$ and $u \in \{u_1,v_1,u_2,v_2\}$. Clearly, $0 \leq t_j \leq 4$. By Lemma \ref{lem2.1},
	\begin{equation*}
	\begin{gathered}
		y_G=y_{G_1}+y_{G_2}-1,\\
		n=n(G_1)+n(G_2),\\ 
		n_2(G)=n_2(G_1)+n_2(G_2)+t_1-t_2,\\
		n_1(G)=n_1(G_1)+n_1(G_2)-t_1.
	\end{gathered}
    \end{equation*}	

	\noindent Applying the induction hypothesis, since $t_1+t_2\leq 4$, we have
	\begin{equation*}
		\begin{aligned}
			y_G&\leq \frac{n(G_1)+n_2(G_1)+2n_1(G_1)-2}{2}+\frac{n(G_2)+n_2(G_2)+2n_1(G_2)-2}{2}-1\\
			&=\frac{n+n_2(G)-t_1+t_2+2n_1(G)+2t_1-6}{2}\\
			&\leq \frac{n+n_2(G)+2n_1(G)-2}{2}.
			\end{aligned}
	\end{equation*}
	
	Finally, we consider the case that $G$ is 3-edge-connected. For each $u \in V(G)$, $d_G(u) \geq 3$. By Lemma \ref{lem2.5}, $y_G\leq \left \lfloor \frac{c(G-A)}{2} \right \rfloor -1 \leq \left \lfloor \frac{n}{2} \right \rfloor-1 \leq \frac{n}{2}-1$, where $A$ is an edge subset of $G$ with $y_G=y_G(A)$. This completes the proof of the theorem.
\end{proof}

	\noindent\textit{Remark.} The upper bound in Theorem \hyperref[th3.1]{3.1} is sharp, which can be achieved by the cycle $C_n(n\geq 3)$, the path $P_n(n\geq 2)$, and the star $S_n(n\geq 5)$. If $G \cong C_n$, then $y_G=n-1=\frac{n+n+0}{2}-1=\frac{n+n_2(G)+2n_1(G)}{2}-1<n$. If $G \cong P_n$, then $y_G=n=\frac{n+n-2+4}{2}-1=\frac{n+n_2(G)+2n_1(G)}{2}-1$. If $G \cong S_n$, then $y_G=n<\frac{3n}{2}-2=\frac{n+0+2(n-1)}{2}-1=\frac{n+n_2(G)+2n_1(G)}{2}-1$.

\newpage
We now prove Theorem \ref{th1.8}. Recall its statement.

\begin{theorembis}{th1.8}
	If $G \ncong K_4$ is a $\lambda$-edge-connected planar graph of order $n$ where $\lambda \leq 3$, then 
	\begin{displaymath}
		^{\partial}\gamma_{M}(G)\geq \left\{
		\begin{array}{ll}
			\max \{0,n-3\chi{(G^c)}\},& \text{if}\ \lambda=1,\\
			\max \{1,n-3\chi{(G^c)}+1\},& \text{if}\ \lambda=2,\\
			\max \{\left\lceil\frac{n}{2}\right\rceil+1,n-\left \lfloor \frac{3\chi{(G^c)}}{2} \right \rfloor+1\},& \text{if}\ \lambda=3.
		\end{array} \right.
	\end{displaymath}
\end{theorembis}

Note that $G \cong K_n$ if and only if $G^c \cong nK_1$. For this case, we have $\chi(G^c)=1$. If $G \cong K_4$, then $y_G=1$. By Theorem \ref{th1.5}, $^{\partial}\gamma_{M}(G)=n-y_G=4-1=3<4=f(n=4,\lambda=3,\chi(G^c)=1)$, where $f$ is the piecewise function in Theorem \ref{th1.8}. If $G \cong K_n$ and $n \geq 5$, then $G$ is non-planar and 4-edge-connected. By Proposition \hyperref[prop2.4]{2.4}, $y_G=1=\chi(G^c)$. So, we may consider that $G \ncong K_n$ in the next proof.

\begin{theorem}\label{th3.2}
	If $G \ncong K_n$ is a $\lambda$-edge-connected graph of order $n$ where $\lambda \leq 3$, then 
	\begin{displaymath}
		y_G\leq \left\{
		\begin{array}{ll}
			\min \{n,3\chi{(G^c)}\},& \text{if}\ \lambda=1,\\
			\min \{n-1,3\chi{(G^c)}-1\},& \text{if}\ \lambda=2,\\
			\min \{\left\lfloor\frac{n}{2}\right\rfloor-1,\left \lfloor \frac{3\chi{(G^c)}}{2} \right \rfloor-1\},& \text{if}\ \lambda=3.
		\end{array} \right.
	\end{displaymath}
\end{theorem}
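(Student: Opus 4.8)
The plan is to bound $y_G$ directly, since Theorem \ref{th3.2} is phrased entirely in terms of this invariant. Because $G$ is connected we have $y_G\geq y_G(\emptyset)=1$, and as $G\ncong K_n$ forces $\chi(G^c)\geq 2$, each right-hand side is at least $1$; hence the case $y_G=1$ is immediate and I may assume $y_G\geq 2$. Fix a smallest edge subset $A$ with $y_G(A)=y_G\geq 2$ and pass to the graph $G_A$ introduced before Lemma \ref{lem2.5}. Recall that $G_A$ is then simple and connected, that each component $F_i$ of $G-A$ is a vertex-induced subgraph with $E(G_A)=A$, and that $|V(G_A)|=c(G-A)=:c$ and $|E(G_A)|=|A|$, so that $y_G=2c-|E(G_A)|-1$.

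Two structural facts about $G_A$ drive the argument. First, $G_A$ inherits the edge-connectivity of $G$: any edge cut of $G_A$, viewed as a subset of $A\subseteq E(G)$, is an edge cut of $G$, because each $F_i$ is connected and the two sides of a $G_A$-cut lift to the corresponding unions of components; hence $\lambda(G_A)\geq\lambda$. Second, and this is the key point, the clique-cover number of $G_A$ is at most $\chi(G^c)$. Indeed, a minimum clique cover of $G$ partitions $V(G)$ into $\chi(G^c)$ cliques, and the image of each such clique under the contraction $v\mapsto u_i$ (for $v\in F_i$) is again a clique of $G_A$: two distinct images $u_i,u_{i'}$ are adjacent because a clique of $G$ meeting both $F_i$ and $F_{i'}$ contributes an edge of $G$ between them, which necessarily lies in $A$. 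These images cover $V(G_A)$, so refining to a partition gives a clique partition of $G_A$ into $p\leq\chi(G^c)$ cliques $C_1,\dots,C_p$ of sizes $b_1,\dots,b_p$ with $\sum_j b_j=c$.

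Next I would estimate $|E(G_A)|$. Writing $t$ for the number of edges of $G_A$ joining distinct cliques, one has $|E(G_A)|=\sum_j\binom{b_j}{2}+t$, so that
\[ y_G=2c-\sum_j\binom{b_j}{2}-t-1=\sum_j g(b_j)-t-1,\qquad g(b):=2b-\binom{b}{2}. \]
The elementary inequality $g(b)\leq 3$ for every integer $b\geq 1$ (equality only at $b\in\{2,3\}$) gives $y_G\leq 3p-t-1\leq 3\chi(G^c)-t-1$. For $\lambda\in\{1,2\}$ I use merely $t\geq 0$, yielding $y_G\leq 3\chi(G^c)-1$, which establishes the chromatic parts $3\chi(G^c)$ and $3\chi(G^c)-1$. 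For $\lambda=3$ the graph $G_A$ is not complete (a complete $G_A=K_c$ that is $3$-edge-connected has $c\geq 4$, forcing $y_G\leq 1$), so $p\geq 2$; then the edges leaving each $C_j$ form an edge cut of the $3$-edge-connected graph $G_A$, whence at least $3$ edges leave each clique, and summing and halving gives $t\geq \tfrac{3p}{2}$. Thus $y_G\leq \tfrac{3p}{2}-1\leq \tfrac32\chi(G^c)-1$, and integrality upgrades this to $\big\lfloor\tfrac32\chi(G^c)\big\rfloor-1$. Finally, combining these with Lemma \ref{lem2.5}, which yields $y_G\leq c\leq n$, $y_G\leq c-1\leq n-1$, and $y_G\leq\lfloor c/2\rfloor-1\leq\lfloor n/2\rfloor-1$ in the three cases, and taking the minimum in each, produces the stated inequalities.

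I expect the main obstacle to be precisely the two facts in the second paragraph: the clique-cover transfer $\chi((G_A)^c)\leq\chi(G^c)$ and the edge-connectivity inheritance $\lambda(G_A)\geq\lambda$. These convert the global invariants $\chi(G^c)$ and $\lambda$ into the local, countable quantities---$p\leq\chi(G^c)$ cliques and the between-clique edge count $t$---on which the one-line estimate $g(b)\leq 3$ operates; in particular the improvement at $\lambda=3$ hinges entirely on the inherited $3$-edge-connectivity forcing $t\geq\tfrac{3p}{2}$. Once these are in place, the degenerate cases ($y_G\leq 1$, singleton cliques, and $p=1$) are routine.
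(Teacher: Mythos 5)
Your proposal is correct, but it reaches the chromatic bounds by a genuinely different route than the paper. The paper never leaves the original graph $G$: it fixes the clique partition $V_1,\dots,V_s$ of $V(G)$ with $s=\chi(G^c)$, proves two local claims from Theorem \ref{thm2.3}~(iii) (a clique not contained in a component meets each component in at most one vertex, and such a split clique has only $2$ or $3$ vertices, via $\binom{k}{2}\le 2k-3$), and then runs a bipartite incidence count between components and split cliques to conclude $c(G-A)\le 3\chi(G^c)$; this single inequality is then fed into the three cases of Lemma \ref{lem2.5}. You instead push everything into the contracted graph $G_A$: the transfer of the clique cover ($p\le\chi(G^c)$ cliques, valid because an edge of $G$ inside a clique joining two components lies in $A$) and of the edge-connectivity ($\lambda(G_A)\ge\lambda$, valid because $|E(F_i,F_j)|\le 1$ makes cuts of $G_A$ lift bijectively to cuts of $G$) are both sound consequences of the minimality of $A$, and the identity $y_G=\sum_j\bigl(2b_j-\binom{b_j}{2}\bigr)-t-1$ together with $2b-\binom{b}{2}\le 3$ and, for $\lambda=3$, $t\ge\frac{3p}{2}$, checks out. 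Your version replaces the paper's two claims and incidence count with one closed-form edge computation, and it even gives the slightly stronger bound $y_G\le 3\chi(G^c)-1$ when $\lambda=1$; the paper's version, by isolating the statement $c(G-A)\le 3\chi(G^c)$, keeps the chromatic and connectivity ingredients cleanly separated so that Lemma \ref{lem2.5} can be cited verbatim for all three cases. Both arguments ultimately rest on the same minimality properties of $A$ from Theorem \ref{thm2.3}, and your handling of the degenerate cases ($y_G=1$ via $\chi(G^c)\ge 2$, and $p\ge 2$ when $\lambda=3$) is adequate.
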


\begin{proof}
	Since $G \ncong K_n$, we know $\chi(G^c)\geq 2$. Recall that $y_G \geq 1$. If $y_G=1$, then the result is trivial. Thus, we may assume that $y_G\geq 2$. Let $A$ be a smallest edge subset of $G$ such that $y_G=y_G(A) \geq 2$. By Theorem \hyperref[thm2.3]{2.3} (i), $2 \leq c(G-A) \leq n$, and by Lemma \ref{lem2.5}, we obtain
		\begin{displaymath}
		y_G\leq \left\{
		\begin{array}{ll}
			c(G-A) \leq n,& \text{if}\ \lambda=1,\\
			c(G-A)-1 \leq n-1,& \text{if}\ \lambda=2,\\
			\left\lfloor\frac{c(G-A)}{2}\right\rfloor-1 \leq \left\lfloor\frac{n}{2}\right\rfloor-1,& \text{if}\ \lambda=3.
		\end{array} \right.
	\end{displaymath}
	\noindent So, it suffices to prove $c(G-A)\leq 3\chi(G^c)$. Let $s=\chi(G^c) \geq 2$. Then $V(G)$ can be partitioned into $s$ sets $V_1,V_2,\ldots,V_s$ such that $G[V_i]$ is a complete graph where $i \in [s]$. First, we  present two claims to describe some structural properties of $G$.
	
	\begin{claim}\label{claim3.1}
		Let $i \in [s]$ and $F$ be a component of $G-A$. If $V_i\nsubseteq$ $V(F)$, then $0 \leq |V(F)\cap V_i| \leq 1$.
	\end{claim}
	
	\begin{proof}
		Suppose to the contrary that there are some $i \in [s]$ and component $F$ of $G-A$ such that $V_i\nsubseteq V(F)$ and $|V(F)\cap V_i|\geq 2$. Then, there exist two vertices $u,v\in V(F)\cap V_i$ and one vertex $w\in V(F')\cap V_i$ for another component $F'$ of $G-A$. Since $G[V_i]$ is a complete graph, $uw,vw\in E(G)$, and since $F$ and $F'$ are two distinct components of $G-A$, $uw,vw\in A$. Hence, we have $|E(F,F')|\geq 2$, contradicting $|E(F,F')|\leq 1$ by Theorem \hyperref[thm2.3]{2.3} (iii).
	\end{proof}

	\begin{claim}\label{claim3.2}
		Let $i \in [s]$ and $F$ be a component of $G-A$ where $V(F) \cap V_i \neq \emptyset$. If $V_i\nsubseteq$ $V(F)$, then $2 \leq |V_i| \leq 3$.
	\end{claim}

	\begin{proof}
		Since $V(F) \cap V_i \neq \emptyset$ and $V_i\nsubseteq$ $V(F)$, $|V_i| \geq 2$. Let $V_i:=\{x_1,x_2,\ldots,x_k\}$ with $k\geq 2$. By Claim \ref{claim3.1}, there exist $k$ distinct components $F_1=F,F_2,\ldots,F_k$ of $G-A$ such that $V(F_j)\cap V_i=\{x_j\}$ for each $j\in [k]$. Note that for each $e\in G[V_i]$, $e\in E(F_1, F_2,\ldots,F_k)$.
	Since $G[V_i]$ is a complete graph, and by Theorem \hyperref[thm2.3]{2.3} (iii), we have 
	\begin{equation*}
		\frac{k(k-1)}{2}=|E(G[V_i])|\leq |E(F_1, F_2,...,F_k)|\leq 2k-3,
	\end{equation*}
	which provides $2\leq k \leq 3$, i.e., $2 \leq |V_i| \leq 3$.
	\end{proof}
	
	Now we shall use the two claims above to complete the proof of Theorem \ref{th3.2} by proving $c(G-A)\leq 3s$. Let $\mathcal{F}$ be the family of components of $G-A$, and $\mathcal{V}:=\{V_1,V_2,\ldots,V_s\}$. Clearly, $|\mathcal{F}|=c(G-A)$ and $|\mathcal{V}|=s$. It suffices to prove $|\mathcal{F}| \leq 3|\mathcal{V}|$. Moreover, let $\mathcal{F}_1:=\{F\in \mathcal{F}: V \subseteq V(F) \text{~for some~} V\in \mathcal{V}\}$ and $\mathcal{V}_1:=\{V\in \mathcal{V}: V\subseteq V(F) \text{~for some~} F \in \mathcal{F}\}$ (note that $V \neq V(G)$ here, and $V$ is just a representative element of $\mathcal{V}$). For each component $F \in \mathcal{F}$, there may be two sets $V,V' \in \mathcal{V}$ such that $V \subseteq V(F)$ and $V' \subseteq V(F)$, while for each set $V \in \mathcal{V}$, there is at most one component $F \in \mathcal{F}$ such that $V \subseteq V(F)$. Hence, $|\mathcal{F}_1| \leq |\mathcal{V}_1| \leq 3|\mathcal{V}_1|$. Let $\mathcal{F}_2:=\mathcal{F} \setminus \mathcal{F}_1$ and $\mathcal{V}_2:=\mathcal{V} \setminus \mathcal{V}_1$, that is, $\mathcal{F}_2:=\{F \in \mathcal{F}: V\nsubseteq V(F) \text{~for any~} V \in \mathcal{V}\}$ and $\mathcal{V}_2:=\{V \in \mathcal{V}: V\nsubseteq V(F) \text{~for any~} F \in \mathcal{F}\}$. Clearly, $|\mathcal{F}|=|\mathcal{F}_1|+|\mathcal{F}_2|$ and $|\mathcal{V}|=|\mathcal{V}_1|+|\mathcal{V}_2|$. It suffices to prove $$|\mathcal{F}_2| \leq 3|\mathcal{V}_2|.$$
	
	We define a bipartite graph $G[\mathcal{F},\mathcal{V}_2]$ as follows: its two parts are $\mathcal{F}$ and $\mathcal{V}_2$, and for $F \in \mathcal{F}$ and $V \in \mathcal{V}_2$, $F$ is adjacent to $V$ in $G[\mathcal{F},\mathcal{V}_2]$ if and only if $|V(F)\cap V|=1$ in $G$. Let $E_1$ denote the edges of $G[\mathcal{F},\mathcal{V}_2]$ incident to all vertices in $\mathcal{V}_2$, and $E_2$ denote the edges of $G[\mathcal{F},\mathcal{V}_2]$ incident to all vertices in $\mathcal{F}_2$. By Claim \ref{claim3.2}, each vertex in $\mathcal{V}_2$ is adjacent to two or three vertices in $\mathcal{F}$. Then $|E_1|\leq 3|\mathcal{V}_2|$. Clearly, for each $F\in \mathcal{F}$, $|V(F)|\geq 1$. By Claim \ref{claim3.1}, each vertex in $\mathcal{F}_2$ is adjacent to at least one vertex in $\mathcal{V}_2$. Then $|E_2|\geq |\mathcal{F}_2|$. Since $E_2\subseteq E_1$, we have
	\begin{equation*}
		|\mathcal{F}_2|\leq |E_2|\leq |E_1|\leq 3|\mathcal{V}_2|.
	\end{equation*}
	This proves the result as desired.
\end{proof}

	Let $H$ be a connected subcubic planar graph, and let $t \in \{2,3\}$ and $t \geq \Delta(H)$. The graph $G:=H\otimes K_t$ is obtained from $H$ and $|V(H)|$ copies of $K_t$ by replacing each vertex $v$ in $H$ with a $K_t$, and then joining the $d_H(v)$ edges of $H$ incident to $v$ to distinct vertices of the $K_t$, respectively (an edge is exactly incident to a vertex, and the operation guarantees planarity of $G$). One can see Fig. \hyperlink{Fig2}{2} for an illustration of this construction, where $H \cong K_4$ and $t=3=\Delta(H)$.
	
	\begin{figure}[h!]
		\begin{center}
			\begin{tikzpicture}[scale=.35]
				\tikzstyle{vertex}=[circle, draw, inner sep=0pt, minimum size=6pt]
				\tikzset{vertexStyle/.append style={rectangle}}
				\vertex (1) at (0,0) [scale=0.5,fill=black] {};
				\vertex (2) at (3,0) [scale=0.5,fill=black] {};
				\vertex (3) at (1.5,2.6) [scale=0.5,fill=black] {};
				\vertex (4) at (1.5,5.6) [scale=0.5,fill=black] {};
				\vertex (5) at (0,8.2) [scale=0.5,fill=black] {};
				\vertex (6) at (3,8.2) [scale=0.5,fill=black] {};
				\vertex (7) at (-2.6,-1.5) [scale=0.5,fill=black] {};
				\vertex (8) at (-5.6,-1.5) [scale=0.5,fill=black] {};
				\vertex (9) at (-4.1,-4.1) [scale=0.5,fill=black] {};
				\vertex (10) at (5.6,-1.5) [scale=0.5,fill=black] {};
				\vertex (11) at (8.6,-1.5) [scale=0.5,fill=black] {};
				\vertex (12) at (7.1,-4.1) [scale=0.5,fill=black] {};
				
				\path
				(1) edge (2)
				(2) edge (3)
				(3) edge (1)
				(1) edge (7)
				(7) edge (8)
				(8) edge (9)
				(9) edge (7)
				(4) edge (5)
				(5) edge (6)
				(6) edge (4)
				(10) edge (11)
				(11) edge (12)
				(12) edge (10)
				(2) edge (10)
				(3) edge (4)
				(5) edge (8)
				(6) edge (11)
				(9) edge (12)
				;
			\end{tikzpicture}
		\end{center}
		{\footnotesize \centerline{{\bf Fig. 2.}~ The graph $K_4\otimes K_3$.\hypertarget{Fig2}}}
	\end{figure}

\begin{proposition}\label{prop3.3}
	Let $H$ be a 3-edge-connected cubic planar graph. Then the graph $G=H\otimes K_3$ with order $n$ is 3-edge-connected, cubic and satisfies the equality 
	\begin{equation*}
	y_G=\frac{n}{2} -1=\frac{3\chi(G^c)}{2} -1.
	\end{equation*}
\end{proposition}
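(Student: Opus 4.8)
The plan is to verify the three assertions—that $G$ is cubic, that $G$ is $3$-edge-connected, and the two displayed equalities—separately, reducing the computation of $y_G$ to the known cubic formula. Write $m:=|V(H)|$; since each vertex of $H$ is replaced by a copy of $K_3$, we have $n=|V(G)|=3m$. First I would check that $G$ is cubic: inside each copy $\Delta_v$ of $K_3$ every vertex carries two triangle-edges, and because $H$ is cubic the three edges of $H$ at $v$ are attached bijectively to the three vertices of $\Delta_v$, so each vertex of $G$ receives exactly one external edge and hence has degree $3$. As $H$ is connected (being $3$-edge-connected), $G$ is connected as well.

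With this, the value of $y_G$ is immediate: since $G$ is a connected cubic graph of order $n$, Theorem \ref{th2.6} gives $x_G=\frac{n}{2}-1$, and Theorem \ref{th1.4} then yields $y_G=x_G=\frac{n}{2}-1$. It remains to identify $\chi(G^c)$ with $\frac{n}{3}=m$, for which I would use the standard fact that $\chi(G^c)$ equals the least number of cliques of $G$ needed to partition $V(G)$ (the color classes of a proper coloring of $G^c$ are precisely the cliques of $G$). The $m$ triangles $\Delta_v$ give such a partition, so $\chi(G^c)\le m$. For the reverse inequality I would argue that $G$ has no $K_4$: a fourth vertex adjacent to all three vertices of a triangle would spend all three of its edges on that triangle and so lie outside every copy of $K_3$, contradicting the construction in which every vertex belongs to some $\Delta_v$. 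Thus every clique of $G$ has at most $3$ vertices, so at least $\lceil n/3\rceil=m$ cliques are required; hence $\chi(G^c)=m=\frac{n}{3}$ and $\frac{3\chi(G^c)}{2}-1=\frac{n}{2}-1=y_G$.

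The main work, and the step I expect to be the obstacle, is showing $\lambda(G)=3$. Since $G$ is cubic, $\lambda(G)\le 3$, so I would assume for contradiction an edge cut $T$ with $|T|\le 2$ inducing a bipartition $V(G)=X\sqcup Y$ into two nonempty sides. A triangle $\Delta_v$ with vertices on both sides must split as $(1,2)$, which already places two of its internal edges in $T$; hence at most one triangle is split. If no triangle is split, each $\Delta_v$ lies wholly in $X$ or wholly in $Y$ and all edges of $T$ are external, so contracting each triangle to a point recovers $H$ and carries $X,Y$ to a bipartition of $V(H)$ with both sides nonempty, whose crossing edges correspond exactly to those of $T$; this exhibits an edge cut of $H$ of size $\le 2$, contradicting the $3$-edge-connectivity of $H$. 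If exactly one triangle $\Delta_v=\{a,b,c\}$ splits, say $a\in X$ and $b,c\in Y$, then necessarily $T=\{ab,ac\}$ and every external edge is non-crossing; letting $X_H,Y_H$ collect the triangles lying wholly in $X$, respectively $Y$, the cut of $H$ separating $\{v\}\cup X_H$ from $Y_H$ then consists only of the two $H$-edges corresponding to the external edges at $b$ and $c$ (with $Y_H\ne\emptyset$ since the external neighbour of $b$ lies in a full triangle inside $Y$). Either way we obtain an edge cut of $H$ of size at most $2$, contradicting $\lambda(H)=3$; therefore $\lambda(G)=3$, completing the proof.
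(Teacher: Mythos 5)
Your proof is correct and follows essentially the same route as the paper's: reduce the computation of $y_G$ to \v{S}koviera's formula $x_G=\frac{n}{2}-1$ for connected cubic graphs (Theorems~\ref{th1.4} and~\ref{th2.6}), and identify $\chi(G^c)=|V(H)|$ by ruling out a clique of size $4$ in $G$ (the paper's phrasing is slightly cleaner here: a $K_4$ in a cubic graph is an entire component, so it would force $G\cong K_4$, impossible since $n=3|V(H)|\geq 12$, whereas your ``lies outside every copy of $K_3$'' needs the extra observation that the excluded vertex is the one not sharing a $\Delta_v$ with the other three). The only substantive difference is that you supply the full edge-cut case analysis for $\lambda(G)=3$, which the paper dismisses as easy to check via three edge-disjoint paths; your version is a welcome elaboration, not a different method.
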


\begin{proof}
	By the definition of $G$, it is clear that $G$ is a connected cubic planar graph. For any two distinct vertices $u,v \in V(G)$, it is easy to check that there are three (pairwise) edge-disjoint $uv$-paths between $u$ and $v$ in $G$ since $H$ is 3-edge-connected. Hence, $G$ is also 3-edge-connected.
	
	\begin{claim}\label{claim3.3}
	$|V(H)|=\chi(G^c)$. 
	\end{claim}
	
	\begin{proof}
		Since the vertices of each $K_3$ in $G^c$ form an independent set, we can assign them with the same color. Thus, we have $\chi(G^c) \leq |V(H)|$. Suppose $\chi(G^c)< |V(H)|$. Since $n=|V(G)|=3|V(H)|$, there must exist a color class $V_i$ with $|V_i| \geq 4$ for some color $i \in [\chi(G^c)]$. Then, $G[V_i]$ is a complete graph of order at least 4. Since $G$ is cubic, $G \cong K_4$. However, $K_4$ can not be obtained by the definition of $G$ from any 3-edge-connected cubic planar graph $H$, a contradiction.
	\end{proof}
	
	If $\chi(G^c)=1$, then since $G$ is planar and 3-edge-connected, $G \cong K_4$. By the same reason as above, we may assume that $\chi(G^c)\geq 2$. Hence, $\left \lfloor \frac{3\chi(G^c)}{2}\right \rfloor-1 >1$, and by Theorem \ref{th3.2}, we have $y_G\leq \min\{\left \lfloor \frac{n}{2} \right \rfloor -1,\left \lfloor \frac{3\chi(G^c)}{2} \right \rfloor -1\}$.
	
	Finally, since $G$ is cubic and planar, by Theorems \ref{th1.4} and \ref{th2.6}, and Claim \ref{claim3.3}, we obtain
	\begin{equation*}
	 y_G=x_G=\frac{n}{2}-1=\frac{3|V(H)|}{2}-1=\frac{3\chi(G^c)}{2}-1.
	\end{equation*}
    This proves the proposition.	
\end{proof}

\noindent\textit{Remark.} For the complete graph $G \cong K_k$ with $k \geq 4$, $\chi(G^c) =1$, and $$\left \lfloor \frac{3\chi{(G^c)}}{2} \right \rfloor-1=0<1=y_G=1\leq \left \lfloor \frac{k}{2} \right \rfloor-1=\left \lfloor \frac{n}{2} \right \rfloor-1.$$
Combining with Proposition \ref{prop3.3}, we know that there are infinite many 3-edge-connected graphs attaining the corresponding bound of Theorem \ref{th3.2}. But for the case $\lambda \in \{1,2\}$, we fail to find any $\lambda$-edge-connected graph $G$ that can attain the bound related to the chromatic number of $G^c$.

\section{\large Further research}

Let $\omega(G)$ and $\alpha(G)$ be the clique number and independence number of a graph $G$. We say that $G$ is {\it perfect} if $\chi(H)=\omega(H)$ for each induced subgraph $H$ of $G$. If $G \in \{P_{n},C_{n}\}$ is of even order $n \geq 4$, then $G$ is bipartite, and thus, perfect. By the well-known {\it Perfect Graph Theorem}, $G$ is perfect if and only if $G^c$ is perfect, and we have $\chi(G^c)=\omega(G^c)=\alpha(G)=\frac{n}{2}$. The following proposition is obvious.

\begin{proposition}\label{prop4.1}
	If $G \in \{P_{n},C_{n}\}$ is of even order $n$, then
	\begin{displaymath}
	y_G= \left\{
	\begin{array}{ll}
		n=2\chi(G^c),& \text{if}\ G \cong P_{n},\\
		n-1=2\chi(G^c)-1,& \text{if}\ G \cong C_{n}.
	\end{array} \right.
\end{displaymath}
\end{proposition}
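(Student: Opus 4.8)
The plan is to combine two facts that are already in place: the value of $\chi(G^c)$ for even paths and even cycles, and the value of $y_G$ coming out of Theorem \ref{th3.1}. The first step is to record, exactly as in the paragraph preceding the proposition, that an even path or cycle is bipartite and hence perfect, so by the Perfect Graph Theorem $\chi(G^c) = \omega(G^c) = \alpha(G) = \frac{n}{2}$; in particular $2\chi(G^c) = n$. This turns the proposition into the purely internal assertion that $y_{P_n} = n$ and $y_{C_n} = n-1$.

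Next I would pin down $y_G$ by sandwiching it between a lower and an upper bound that agree. For the lower bound I would take $A = E(G)$, so that $G - A$ is edgeless and $c(G-A) = n$, giving $y_G \geq y_G(E(G)) = 2n - |E(G)| - 1$; this equals $n$ for $P_n$ (since $|E(P_n)| = n-1$) and $n-1$ for $C_n$ (since $|E(C_n)| = n$). For the matching upper bound I would invoke Theorem \ref{th3.1} with the degree counts $n_1 = 2,\ n_2 = n-2$ for $P_n$ and $n_1 = 0,\ n_2 = n$ for $C_n$, which return the values $\frac{n+(n-2)+4}{2}-1 = n$ and $\frac{n+n}{2}-1 = n-1$ respectively. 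Since the two bounds coincide in each case, $y_{P_n} = n$ and $y_{C_n} = n-1$.

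The final step is pure substitution: using $n = 2\chi(G^c)$ I would rewrite $y_{P_n} = n = 2\chi(G^c)$ and $y_{C_n} = n-1 = 2\chi(G^c)-1$, which are exactly the two claimed identities. I expect no genuine obstacle here, which is why the proposition is flagged as obvious; the only point requiring a little care is the evenness hypothesis, since it is precisely what makes $G$ bipartite (so that the Perfect Graph Theorem applies and $\alpha(G)$ equals $\frac{n}{2}$ rather than $\lceil n/2 \rceil$) and what makes $\frac{n}{2}$ an integer.
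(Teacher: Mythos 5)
Your proposal is correct, and all the computations check out: the Perfect Graph Theorem step gives $\chi(G^c)=\alpha(G)=\frac{n}{2}$ exactly as in the paragraph the paper places before the proposition, the choice $A=E(G)$ gives the lower bounds $y_{P_n}\geq 2n-(n-1)-1=n$ and $y_{C_n}\geq 2n-n-1=n-1$, and Theorem \ref{th3.1} supplies matching upper bounds. The paper itself declares the proposition obvious and supplies no proof (the values $y_{P_n}=n$ and $y_{C_n}=n-1$ are already asserted in the sharpness remark after Theorem \ref{th3.1}), so there is no official argument to compare against; your sandwich is a legitimate way to fill the gap. A marginally shorter route, and probably the one the authors had in mind, is via the decay number: $P_n$ has a unique spanning tree, so $x_{P_n}=c(nK_1)=n$, and every spanning tree of $C_n$ leaves a single edge plus $n-2$ isolated vertices, so $x_{C_n}=n-1$; Theorem \ref{th1.4} then gives $y_G=x_G$ directly, with no need for Theorem \ref{th3.1}. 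Your version has the advantage of staying entirely within the $y_G$ formalism and of reusing the main theorem of Section \ref{sec3}, at the cost of invoking a heavier result than necessary. Your closing observation about where evenness is actually used (bipartiteness, hence perfection, hence $\alpha(G)=\frac{n}{2}$ rather than $\lceil n/2\rceil$) is exactly the right point to flag.
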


By Proposition \ref{prop4.1}, we see that the even path $P_{n}$ and even cycle $C_{n}$ attain the order-related bound of Theorem \ref{th3.2} for the case $\lambda \in \{1,2\}$. But, if $G \cong P_{n}$, then $y_G=2\chi(G^c)<3\chi(G^c)$, and if $G \cong C_{n}$, then $y_G=2\chi(G^c)-1<3\chi(G^c)-1$. Note that $P_{n}=P_{\frac{n}{2}} \otimes K_2$ and $C_{n}=C_{\frac{n}{2}} \otimes K_2$. Inspired by Proposition \ref{prop3.3}, we propose the following conjecture.

\begin{conjecture}
	If $G \ncong C_3$ is a $\lambda$-edge-connected (planar) graph with $\lambda \in \{1,2\}$, then $$y_G \leq 2\chi(G^c)-\lambda+1.$$
\end{conjecture}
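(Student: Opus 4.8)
The plan is to follow and sharpen the counting scheme of Theorem~\ref{th3.2}. Since $y_G\ge 1$ always and $y_G=1$ is trivial, assume $y_G\ge 2$ and let $A$ be a smallest edge subset with $y_G=y_G(A)$. Partition $V(G)$ into $s:=\chi(G^c)$ cliques $V_1,\ldots,V_s$ and form the simple connected contracted graph $G_A$, whose vertices are the components of $G-A$ and whose edges are exactly $A$ (as recorded before Lemma~\ref{lem2.5}). The engine of the argument is the identity $y_G=c(G-A)-\beta(G_A)$, obtained from $y_G=2c(G-A)-|A|-1$ together with $|A|=|E(G_A)|$ and $\beta(G_A)=|E(G_A)|-c(G-A)+1$; note also that $\delta(G_A)\ge\lambda$, since each component is separated from the rest of $G$ by an edge cut of size at least $\lambda$. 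First I would isolate the clean uniform statement $y_G\le 2\chi(G^c)$ for \emph{every} connected graph $G$: this already settles $\lambda=1$, and it holds automatically when $\lambda\ge 3$ (by Theorem~\ref{th3.2} and Proposition~\ref{prop2.4}), so it is precisely the missing content for $\lambda\le 2$.

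The heart is a charging argument showing each of the $s$ cliques contributes at most $2$ to $c(G-A)-\beta(G_A)$. By Claims~\ref{claim3.1} and \ref{claim3.2} each $V_i$ is of one of three types, giving a partition $\mathcal V=\mathcal V_1\cup\mathcal V_2^{(2)}\cup\mathcal V_2^{(3)}$: it may lie inside a single component ($+1$ to $c(G-A)$, nothing to $\beta(G_A)$); it may have size $2$ and be split between two components ($+2$ to $c(G-A)$, tree-like); or it may have size $3$ and be split among three components, in which case its three internal edges become a triangle of $G_A$, contributing $+3$ to $c(G-A)$ but forcing $+1$ to $\beta(G_A)$, again a net $+2$. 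Summing these local contributions gives, heuristically, $y_G=c(G-A)-\beta(G_A)\le 2s$, with $P_n=P_{n/2}\otimes K_2$ and $C_n=C_{n/2}\otimes K_2$ showing the extreme where every clique is a size-$2$ split and the bound is met.

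The main obstacle is that this per-clique bookkeeping is not exact. The graph $G_A$ may contain \emph{cross edges}, whose two endpoints lie in different cliques of the partition, and a single component may be shared by the footprints of several split cliques. Both phenomena spoil the naive sum: cross edges enlarge $c(G-A)$ without attaching to any one clique, and shared components may make the triangles of the size-$3$ cliques dependent in the cycle space of $G_A$. To make the estimate exact, let $|E_\times|$ be the number of cross edges and let $G_A'$ be $G_A$ with one edge deleted from each size-$3$ triangle; then the identity refines to $y_G=|\mathcal V_2^{(2)}|+|\mathcal V_2^{(3)}|+|E_\times|+1-2\beta(G_A')$, and $y_G\le 2s$ becomes equivalent to $|E_\times|+1\le 2\beta(G_A')+2|\mathcal V_1|+|\mathcal V_2^{(2)}|+|\mathcal V_2^{(3)}|$. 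This is a global statement about the cycle space of $G_A$ — each cross edge must be paid for by independent cycles — that does \emph{not} follow from the local Claims~\ref{claim3.1}--\ref{claim3.2}; I expect it to require a matroid or ear-decomposition analysis of how the cross edges sit relative to the clique-forest, and this is the step I expect to be genuinely hard (indeed it is why the statement is posed as a conjecture).

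Granting the uniform bound $y_G\le 2\chi(G^c)$, the refinement for $\lambda=2$ should follow quickly: there $\delta(G_A)\ge 2$ forces $|E(G_A)|\ge c(G-A)$, hence $\beta(G_A)\ge 1$, so the identity $y_G=c(G-A)-\beta(G_A)$ yields one unit of slack and upgrades the bound to $y_G\le 2\chi(G^c)-1$. The sole exception is when this forced cycle is the lone triangle of a single size-$3$ clique with $s=1$, which is exactly $G\cong C_3$ (where $c(G-A)=3$, $\beta(G_A)=1$ and $y_G=2=2s$); this is precisely the case excluded in the statement. Finally, the extremal graphs $P_n$ and $C_n$ of Proposition~\ref{prop4.1}, being $P_{n/2}\otimes K_2$ and $C_{n/2}\otimes K_2$, confirm tightness for $\lambda=1$ and $\lambda=2$ respectively, in parallel with the role of $H\otimes K_3$ in Proposition~\ref{prop3.3}.
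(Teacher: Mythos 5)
First, a point of reference: the statement you are addressing is posed in the paper only as a \emph{conjecture} --- the paper contains no proof of it --- so there is no ``paper's own argument'' to compare yours against, and the only question is whether your proposal stands on its own. It does not: you yourself flag the decisive step, the global inequality $|E_\times|+1\le 2\beta(G_A')+2|\mathcal{V}_1|+|\mathcal{V}_2^{(2)}|+|\mathcal{V}_2^{(3)}|$ that is supposed to absorb the cross edges and the interactions between split cliques sharing components, as something you ``expect to require a matroid or ear-decomposition analysis'' and do not carry out. A plan whose central inequality is left as a hoped-for lemma is a gap, not a proof. The $\lambda=2$ refinement has a second, independent defect: the ``one unit of slack'' you extract from $\delta(G_A)\ge 2\Rightarrow\beta(G_A)\ge 1$ is not free, because your own bookkeeping already charges $|\mathcal{V}_2^{(3)}|$ units of $\beta(G_A)$ to the triangles of the size-$3$ split cliques; when $|\mathcal{V}_2^{(3)}|\ge 1$ the forced cycle may be one of those triangles, and nothing upgrades $2\chi(G^c)$ to $2\chi(G^c)-1$. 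Your assertion that the only exceptional configuration is $G\cong C_3$ is stated, not argued.

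That said, your framework is closer to settling the $\lambda=1$ case than you give it credit for, and the detour through cross edges and the modified graph $G_A'$ appears unnecessary. Since $A$ is smallest, Theorem \ref{thm2.3}(iii) gives $|E(F_i,F_j)|\le 1$, so the triangles of $G_A$ arising from distinct size-$3$ split cliques are pairwise edge-disjoint, hence linearly independent in the cycle space, and $\beta(G_A)\ge|\mathcal{V}_2^{(3)}|$ follows outright. Combining this with the incidence count already in the proof of Theorem \ref{th3.2} (namely $|\mathcal{F}_1|\le|\mathcal{V}_1|$ and, via Claims \ref{claim3.1} and \ref{claim3.2}, $|\mathcal{F}_2|\le 2|\mathcal{V}_2^{(2)}|+3|\mathcal{V}_2^{(3)}|$), your identity $y_G=c(G-A)-\beta(G_A)$ yields $y_G\le|\mathcal{V}_1|+2|\mathcal{V}_2^{(2)}|+2|\mathcal{V}_2^{(3)}|\le 2\chi(G^c)$ with no global cycle-space analysis at all; cross edges only increase $\beta(G_A)$ and therefore only help. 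What genuinely remains open --- and where the content of the conjecture lies --- is the extra $-1$ when $\lambda=2$: one must exhibit a cycle of $G_A$ independent of the $|\mathcal{V}_2^{(3)}|$ clique triangles, or find a unit of slack in one of the two counting inequalities, in every case except $G\cong C_3$. Your proposal does not do this.
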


\vspace{3mm}
\noindent{\large\bf Acknowledgments}
\vspace{3mm}

This work was supported by the National Natural Science Foundation of China (Nos. 12171402 and 12361070).


\begin{thebibliography}{}
	
	\bibitem{Archdeacon2015} D. Archdeacon, M. Kotrb\v{c}\'ik, R. Nedela, M. \v{S}koviera, Maximum genus, connectivity, and Nebesk\'y's theorem, Ars Math. Contemp. 9 (2015) 51-61.
	
	\bibitem{Bondy2008} J.A. Bondy, U.S.R. Murty, Graph Theory, GTM 244, Springer, 2008.
	
	\bibitem{Chen&Archdeacon1996} J. Chen, D. Archdeacon, J.L. Gross, Maximum genus and connectivity, Discrete Math. 149 (1996) 19-29.
	
	\bibitem{Chen&Chen2022} Q. Chen, Y. Chen, Partial-duals for planar ribbon graphs, arXiv:2204.08010 [math.CO].
	
	\bibitem{Chen&Kanchi1996} J. Chen, S.P. Kanchi, J.L. Gross, A tight lower bound on the maximum genus of a simplicial graph, Discrete Math. 156 (1996) 83-102.
	
	\bibitem{Chmutov2009} S. Chmutov, Generalized duality for graphs on surfaces and the signed Bollob\'as-Riordan polynomial, J. Comb. Theory, Ser. B 99 (2009) 617-638.
	
	\bibitem{Deng2001} H. Deng, Y. Huang, Maximum genus, domination number and girth of graphs, Appl. Math. J. Chinese Univ. Ser. A 16 (2001) 15-20.
	
	\bibitem{Ellis-Monaghan2013} J.A. Ellis-Monaghan, I. Moffatt, Graphs on Surfaces, Springer, New York, 2013. 
	
	\bibitem{Huang1999} Y. Huang, Maximum genus and girth of graphs, Discrete Math. 194 (1999) 253-259.
	
	\bibitem{Huang2003} Y. Huang, Maximum genus and chromatic number of graphs, Discrete Math. 271 (2003) 117-127.
	
	\bibitem{Huang&Liu1997} Y. Huang, Y. Liu, Maximum genus and maximum nonseparating independent set of a  3-regular graph, Discrete Math. 176 (1997) 149-158.
	
	\bibitem{Huang&Liu1999} Y. Huang, Y. Liu, The maximum genus of graphs with diameter three, Discrete Math. 194 (1999) 139-149.
	
	\bibitem{Huang&Liu2000} Y. Huang, Y. Liu, Maximum genus, independence number and girth, Chinese Ann. Math. Ser. B 21 (2000) 77-82.
	
	\bibitem{Kundu1974} S. Kundu, Bounds on the number of disjoint spanning trees, J. Combin. Theory Ser. B 17 (1974) 199-203.
	
	\bibitem{Li2000} D. Li, Y. Liu, Maximum Genus, girth and connectivity, European J. Combin. 21 (2000) 651-657.
	
	\bibitem{Long2018} S. Long, H. Ren, The decycling number and maximum genus of cubic graphs, J. Graph Theory 88 (2018) 375-384.
	
	\bibitem{Nebesky1981} L. Nebesk\'y, A new characterization of the maximum genus of a graph, Czechoslovak Math. J. 31 (1981) 604-613.
	
	\bibitem{Nebesky1995} L. Nebesk\'y, A characterization of the decay number of a connected graph, Math. Slovaca 45 (1995) 349-352.
	
	\bibitem{Ouyang2010} Z. Ouyang, J. Wang, Y. Huang, On the lower bounds for the maximum genus for simple graphs, European J. Combin. 31 (2010) 1235-1242.
	
	\bibitem{Qin2024} C. Qin, Y. Chen, A characterization theorem for nonmaximal partial-dual planar graphs and the maximum partial-dual genus for a planar triangulated graph, Acta Math. Sinica (Chinese Ser.) 67 (2024) 531-538.
	
	\bibitem{Skoviera1991} M. \v{S}koviera, The maximum genus of graphs of diameter two, Discrete Math. 87 (1991) 175-180.
	
	\bibitem{Skoviera1992} M. \v{S}koviera, The decay number and the maximum genus of a graph, Math. Slovaca 42 (1992) 391-406.
	
	\bibitem{Xuong1979} N.H. Xuong, How to determine the maximum genus of a graph, J. Combin. Theory Ser. B 26 (1979) 217-225.
	
\end{thebibliography}
\end{document}